\theoremstyle{definition}
\newtheorem{ntn}{Notation}[section]
\newtheorem{dfn}[ntn]{Definition}
\theoremstyle{plain}
\newtheorem{lem}[ntn]{Lemma}
\newtheorem{prp}[ntn]{Proposition}
\newtheorem{thm}[ntn]{Theorem}
\newtheorem{cor}[ntn]{Corollary}
\theoremstyle{remark}
\newtheorem{rem}[ntn]{Remark}
\def\floor[#1]{\lfloor #1 \rfloor }
\newcommand{\z}{\mathbb{Z}}
\newcommand{\q}{\mathbb{Q}}
\newcommand{\lan}{\langle}
\newcommand{\ran}{\rangle}
\newcommand{\GL}{\mathit{{\rm GL}}}
\newcommand{\GM}{\mathit{{\rm GM}}}
\newcommand{\SL}{\mathit{{\rm SL}}}
\newcommand{\SK}{\mathit{{\rm SK}}}
\newcommand{\E}{\mathcal{E}}
\newcommand{\ppp}{\mathfrak{p}}
\renewcommand{\H}{\tilde{H}}
\newcommand{\ff}{\mathcal{F}}
\newcommand{\Ab}{\underline{\mathcal{A}b}}
\newcommand{\Rings}{\underline{\mathcal{R}ings}}
\newcommand{\inc}{{\rm inc}}
\newcommand{\id}{{\rm id}}
\newcommand{\tors}{{{\rm Tor}_1^{\z}}}
\newcommand{\s}{\Sigma}
\newcommand{\si}{\sigma}
\newcommand{\del}{\delta}
\newcommand{\arr}{\rightarrow}
\newcommand{\larr}{\longrightarrow}
\newcommand{\harr}{\hookrightarrow}
\newcommand{\se}{\subseteq}
\newcommand{\mt}{\mapsto}
\newcommand{\fff}{{F^\times}}
\newcommand{\rr}{{R^{\times}}}
\newcommand{\stabe}{{\rm Stab}}
\renewcommand{\char}{{\rm char}}
\newcommand{\diag}{{\rm diag}}
\renewcommand{\ker}{{\rm ker}}
\newcommand{\coker}{{\rm coker}}
\newcommand{\im}{{\rm im}}
\newcommand{\ind}{{\rm ind}}
\newcommand {\mtx}[4]
{\left(
\begin{array}{cc}
#1 & #2   \\
#3 & #4
\end{array}
\right)}
\newtheoremstyle{athm}
  {}
  {}
  {\itshape}
  {}
  {\scshape}
  {}
  {.5em}
  {\thmnote{#3}}
\theoremstyle{athm}
\begin{document}

\title{Bloch-Wigner theorem over rings with many units}
\author{Behrooz  Mirzaii}

\maketitle

\section*{Introduction}

The Bloch-Wigner theorem appears in different areas of mathematics.
It appears in  Number theory, in relation with the dilogarithm function
\cite{bloch2000}. It has a deep connection with the scissor congruence
problem in three dimensional hyperbolic geometry \cite{dupont-sah1982},
\cite{neuman-yang1999}. In algebraic $K$-theory, it appears in
connection with the third $K$-group \cite{suslin1991}, etc.

This theorem was proved by Bloch and Wigner independently and in a
somewhat different form. It asserts the existence of the exact
sequence
\[
0 \arr \q/\z \arr H_3(\SL_2(F), \z) \arr \ppp(F) \arr
\bigwedge\!{}_\z^2 \fff \arr K_2(F) \arr 0,
\]
where $F$ is an algebraically closed field of $\char(F)=0$
and $\ppp(F)$ is the pre-Bloch group of $F$.
A precise description of the groups and the maps involved  will be
explained in this paper. See also \cite{dupont-sah1982} or \cite
{sah1989} for an alternative source.

In his remarkable paper \cite{suslin1991}, Suslin generalized this
theorem to all infinite fields. He proved that for an infinite field
$F$, there is an exact sequence
\[
0 \arr \tors(\mu_F, \mu_F)^\sim \arr K_3(F)^\ind \arr \ppp(F) \arr
(\fff \otimes_\z \fff)_\sigma \arr K_2(F) \arr 0,
\]
where $\tors(\mu_F, \mu_F)^\sim$ is the unique nontrivial extension
of $\tors(\mu_F, \mu_F)$ by $\z/2$ and $K_3(F)^\ind:=\coker\Big(K_3^M(F)
\arr K_3(F)\Big)$ is the indecomposable part of $K_3(F)$.

It is very desirable to have such a nice result for a much wider
class of rings, such as the class of commutative rings with many
units (see for example \cite{cathelineau2009}). The purpose of this
article is to provide a version of Bloch-Wigner theorem over the
class of rings with many units. To do this, we shall replace
$K_3(R)^\ind$ or $H_0(\rr, H_3(\SL_2(R), \z))$ with a group
``close'' to them. Here we establish the exact sequence
\begin{gather*}
\tors(\mu_R, \mu_R) \arr \H_3(\SL_2(R), \z)
\arr \ppp(R) \arr (\rr \otimes_\z \rr)_\sigma \arr K_2(R) \arr 0,
\end{gather*}
where $\H_3(\SL_2(R), \z)$ is the group
\[
H_3(\GL_2(R), \z)/
\im\Big(H_3(\rr, \z) + \rr\otimes H_2(\rr, \z)\Big).
\]
Here $\im\Big(H_3(\rr, \z) + \rr \otimes
H_2(\rr,\z)\Big) \se H_3(\GL_2(R),\z)$
is induced by the diagonal inclusion of $\rr \times \rr$ in $\GL_2(R)$.
Moreover when $R$ is a domain with many units, e.g. an infinite field, then
the left hand side map in the above exact sequence is injective.

When we kill two torsion elements in $\H_3(\SL_2(R), \z)$
or when $\rr=\rr^2$, we get a familiar group, that is
\[
\H_3(\SL_2(R), \z[1/2]) \simeq H_0(\rr, H_3(\SL_2(R), \z[1/2]))
\]
or $\H_3(\SL_2(R), \z)\simeq H_3(\SL_2(R), \z)$, respectively.
We should also mention that,
\[
\H_3(\SL_2(R), \q)\simeq H_0(\rr, H_3(\SL_2(R), \q))
\simeq K_3(R)^\ind \otimes \q
\]
(see  \cite{elbaz1998} or \cite{mirzaii-2008} for the second isomorphism).

Let $k$ be an algebraically closed field of $\char(k)\neq 2$.
If $R$ is the ring of dual numbers $k[\epsilon]$
or a local henselian ring with residue field $k$,
then we have the Bloch-Wigner exact sequence
\[
\tors(\mu_R, \mu_R) \arr H_3(\SL_2(R), \z)
\arr \ppp(R) \arr  (\rr \otimes_\z \rr)_\sigma
\arr K_2(R) \arr 0.
\]
Moreover when $R$ is a henselian domain or a $k$-algebra with $\char(k)=0$,
then the left-hand side map in this exact sequence is injective.
Furthermore if $\char(k)=0$,  $H_3(\SL_2(R), \z)$ reduces to $K_3(R)^\ind$.

In some cases, we also establish a `relative' version of Bloch-Wigner
exact sequence.

To prove our main theorem, we introduce and study a spectral sequence
similar to the one introduced in \cite{mirzaii2007}.
This spectral sequence has a close connection to the one introduced
in \cite[\S 2]{suslin1991}, but it is somewhat easier to study.
Our main result comes out of careful analysis of this spectral sequence.

\subsection*{Notation}
In this paper by $H_i(G)$ we mean  the  homology of group $G$ with
integral coefficients, namely $H_i(G, \z)$. By $\GL_n$ (resp.
$\SL_n$) we mean the general (resp. special) linear group $\GL_n(R)$
(resp. $\SL_n(R)$).
If $A \arr A'$ is
a homomorphism of abelian groups, by $A'/A$ we mean $\coker(A \arr
A')$. We denote an element of $A'/A$ represented by $a' \in A'$ again
by $a'$. If $Q=\z[1/2]$ or $\q$, by $A_Q$ we mean $A \otimes_\z Q$.

\section{Bloch groups over rings with many units}\label{section1}

Let $\rr$ be the multiplicative group of invertible
elements of a commutative ring $R$.
Define the {\it pre-Bloch group} $\ppp(R)$ of $R$ as the quotient of
the free abelian group $Q(R)$ generated by symbols $[a]$, $a, 1-a \in
\rr$, to the subgroup generated by elements of the form
\[
[a] -[b]+\bigg[\frac{b}{a}\bigg]-\bigg[\frac{1- a^{-1}}{1- b^{-1}}\bigg]
+ \bigg[\frac{1-a}{1-b}\bigg],
\]
where $a,1-a, b, 1-b, a-b \in \rr$. Define
\[
\lambda': Q(R) \arr \rr \otimes \rr, \ \ \ \ [a] \mt a \otimes (1-a).
\]
\begin{lem}\label{suslin-lemma}
If $a,1-a, b, 1-b, a-b \in \rr$, then
\[
\lambda'\Big(
[a] -[b]+\bigg[\frac{b}{a}\bigg]-\bigg[\frac{1- a^{-1}}{1- b^{-1}}\bigg]
+ \bigg[\frac{1-a}{1-b}\bigg] \Big)
=a \otimes \bigg( \frac{1-a}{1-b}\bigg)+\bigg(\frac{1-a}{1-b}\bigg)\otimes a.
\]
\end{lem}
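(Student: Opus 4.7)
The plan is to prove this by a direct bilinear calculation in $\rr \otimes_\z \rr$, using the identities $xy \otimes z = x \otimes z + y \otimes z$ and $x \otimes yz = x \otimes y + x \otimes z$ that hold because we are tensoring multiplicative abelian groups.

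First I would simplify the arguments appearing inside $\lambda'$. Writing $c := \frac{1-a}{1-b}$ for brevity, the key computations are
\[
1 - \frac{b}{a} = \frac{a-b}{a}, \qquad \frac{1-a^{-1}}{1-b^{-1}} = \frac{b(1-a)}{a(1-b)} = \frac{b}{a}\,c,
\]
\[
1 - \frac{1-a^{-1}}{1-b^{-1}} = \frac{a-b}{a(1-b)}, \qquad 1 - c = \frac{a-b}{1-b}.
\]
After these substitutions, each of the five terms $\lambda'([x]) = x \otimes (1-x)$ becomes a tensor whose two factors are monomials (with signed exponents) in the finite set of \emph{atoms} $\{a,\,b,\,1-a,\,1-b,\,a-b\}$.

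Next I would expand fully by bilinearity so that every resulting summand has a single atom on each side, and organize the contributions of all five summands into a single $5\times 5$ table indexed by pairs of atoms, recording with sign the coefficient each summand contributes to each cell. Doing the same atomic expansion on the right-hand side,
\[
a \otimes c + c \otimes a = a \otimes (1-a) - a \otimes (1-b) + (1-a) \otimes a - (1-b) \otimes a,
\]
yields a second such table supported on only four of the twenty-five cells. The lemma then reduces to checking cell-by-cell that the two tables agree.

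The main obstacle is entirely organizational rather than conceptual: the third summand expands into $2\times 2 = 4$ atomic tensors, and the fourth into $4 \times 3 = 12$, so one must be careful about signs and multiplicities. A natural sanity check is that each \emph{diagonal} atomic tensor (like $a \otimes a$ or $(1-a)\otimes(1-a)$) must cancel entirely in the sum, as must every cell not in the set $\{a,\,1-a\} \times \{a,\,1-b\} \cup \{a,\,1-b\} \times \{a,\,1-a\}$; grouping the calculation according to these expected cancellations makes the bookkeeping manageable.
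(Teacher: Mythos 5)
Your proposal is correct and is exactly the paper's approach: the paper's proof is simply ``this follows from a direct computation,'' and your atomic expansion over $\{a,\,b,\,1-a,\,1-b,\,a-b\}$ is that computation carried out (the four intermediate identities you record are all correct, and the table does collapse to $a\otimes(1-a)+(1-a)\otimes a - a\otimes(1-b)-(1-b)\otimes a$, which equals the right-hand side).
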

\begin{proof}
This follows from a direct computation.
\end{proof}

Let $(\rr \otimes \rr)_\sigma :=\rr \otimes \rr/
\lan a\otimes b + b\otimes a: a, b \in \rr \ran$.
We denote the elements of $\ppp(R)$ and $(\rr \otimes \rr)_\sigma$
represented by $[a]$ and $a\otimes b$ again by $[a]$ and $a\otimes b$,
respectively. By Lemma \ref{suslin-lemma}, we have a well-defined map
\[
\lambda: \ppp(R) \arr (\rr \otimes \rr)_\sigma, \ \ \
[a] \mt a \otimes (1-a).
\]

\begin{dfn}
The kernel of $\lambda: \ppp(R) \arr (\rr \otimes \rr)_\sigma$ is called the
{\it Bloch group} of $R$ and is denoted by $B(R)$.
\end{dfn}

\begin{rem}
This version of the pre-Bloch group and the Bloch group of a commutative
ring is due to Suslin
\cite[\S 1]{suslin1991}. We refer to \cite{lichenbaum1989} and \cite{sah1989}
for more information in this direction.
\end{rem}

In this article we will study Bloch groups over
the class of rings with many units, which seems the
appropriate class of rings to study them.

\begin{dfn}
We say that a commutative ring $R$ is a {\it ring with many units}
if for any $n \ge 2$ and for any finite number of surjective linear
forms $f_i: R^n \arr R$, there exists a $v \in  R^n$ such that, for
all $i$, $f_i(v) \in \rr$.
\end{dfn}

For a commutative ring $R$, the $n$-the Milnor $K$-group $K_n^M(R)$ is
defined as an abelian group generated by symbols
\[
\{a_1, \dots, a_n\}, \ \ \ \  a_i \in \rr,
\]
subject to multilinearity and the relation $\{a_1, \dots, a_n \}=0$
if there exits $i, j$, $i \neq j$, such that $a_i+a_j=0$ or  $1$.

\begin{lem}\label{milnor-k2}
Let $R$ be a commutative ring with many units. Then
\[
K_2^M(R) \simeq \rr \otimes \rr/\lan a \otimes
(1-a):a, 1-a \in \rr \ran.
\]
\end{lem}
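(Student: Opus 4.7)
The plan is to construct an inverse to the natural surjection
\[
\pi : A := \rr \otimes_\z \rr / \lan a \otimes (1-a) : a, 1-a \in \rr \ran \two K_2^M(R), \quad a \otimes b \mt \{a,b\},
\]
which is well-defined on the source because the Steinberg relation holds in $K_2^M(R)$. The candidate inverse $\{a,b\} \mt a \otimes b$ is well-defined precisely when the remaining Milnor relation $a \otimes (-a) = 0$ (for every $a \in \rr$) already holds in $A$. So the whole argument reduces to verifying this identity in $A$.

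First I would treat the case $a, 1-a \in \rr$. For $a \neq 1$, the classical identity $-a = (1-a)/(1-a^{-1})$ together with bilinearity gives
\[
a \otimes (-a) = a \otimes (1-a) - a \otimes (1 - a^{-1}).
\]
The first summand vanishes by the Steinberg relation. For the second, I would use $a \otimes x + a^{-1} \otimes x = 1 \otimes x = 0$ (from $1 = a \cdot a^{-1}$) to replace $a$ by $-a^{-1}$, and then apply Steinberg to $a^{-1}$, noting that $1 - a^{-1} = (a-1)/a \in \rr$. The edge case $a = 1$ is immediate from $1 \otimes (-1) = 0$.

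For arbitrary $a \in \rr$, I would invoke the many units hypothesis applied to the surjective linear forms $(u, v) \mt u,\ v,\ u - v,\ au - v$ on $R^2$ to produce $(b_1, b_2)$ on which all four forms are units. Setting $b := b_2 / b_1$ and $c := a b_1 / b_2$ gives a factorization $a = bc$ with $b, c, 1-b, 1-c$ all units. Bilinear expansion then yields
\[
(bc) \otimes (-bc) = \big(b \otimes (-b)\big) + \big(c \otimes (-c)\big) + \big(b \otimes c + c \otimes b\big),
\]
and the first step disposes of the first two summands, leaving $a \otimes (-a) = b \otimes c + c \otimes b$.

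The main obstacle is the remaining antisymmetry $b \otimes c + c \otimes b = 0$ in $A$. When $1 - bc = 1 - a$ happens itself to be a unit, a single further application of the first step to $bc$ closes the argument. In general, I would iterate the many-units trick, introducing further auxiliary elements so that a chain of applications of the first step makes all intermediate triples amenable to Steinberg. Carrying out this iteration cleanly and choosing a consistent system of auxiliary factorizations are the delicate parts of the proof, and they are exactly where the full strength of the many units hypothesis is used.
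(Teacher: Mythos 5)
The paper does not actually prove this lemma --- it simply cites Proposition 3.2.3 of Guin --- so any self-contained argument is already a different route from the paper's. Your first two steps are sound: the identity $-a=(1-a)/(1-a^{-1})$ does give $a\otimes(-a)=0$ in $A$ whenever $1-a\in\rr$, and the many-units factorization $a=bc$ with $b,c,1-b,1-c\in\rr$ is the right move. The problem is the final step, which you flag as ``delicate'' but do not carry out --- and it is not bookkeeping, it is the entire remaining content of the lemma. Worse, the reduction you stop at is circular: granting step one, $b\otimes c+c\otimes b=(bc)\otimes(-bc)-b\otimes(-b)-c\otimes(-c)=a\otimes(-a)$, so the identity $a\otimes(-a)=b\otimes c+c\otimes b$ is a tautology, and iterating the same factorization can only reproduce it. No amount of re-factoring $a$ alone will break the loop, because antisymmetry for a pair $(u,v)$ and the relation $(uv)\otimes(-uv)=0$ are equivalent modulo the cases already handled.

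The missing idea is to first record that antisymmetry $u\otimes v+v\otimes u=0$ \emph{already holds} in $A$ for any pair with $1-u$, $1-v$ \emph{and} $1-uv$ all units (apply step one to $u$, $v$ and $uv$ in the displayed identity above), and then to split the second factor rather than the first: choose, by many units applied to the surjective forms $u,\ v,\ u-v,\ u-bv,\ cu-v,\ bcu-v$ on $R^2$, an $s\in\rr$ with $1-s,\ 1-bs,\ 1-cs^{-1},\ 1-bcs^{-1}\in\rr$; writing $c=s\cdot(cs^{-1})$ and expanding bilinearly, $b\otimes c+c\otimes b$ becomes the sum of the contributions of the two pairs $(b,s)$ and $(b,cs^{-1})$, each of which satisfies the three-unit criterion and hence vanishes. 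With that supplied your argument closes; as written, the proof is incomplete at exactly the point where the many-units hypothesis has to do its real work.
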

\begin{proof}
See Proposition $3.2.3$ in \cite{guin1989}.
\end{proof}

Therefore for a ring $R$ with many units we obtain the exact sequence

\[
0 \arr B(R) \arr \ppp(R) \overset{\lambda}{\arr} (\rr \otimes \rr)_\sigma
\arr K_2^M(R) \arr 0.
\]

The study of rings with many units is originated by  W. van der
Kallen in \cite{vdkallen1977}, where he showed that $K_2$ of such
rings behaves very much like $K_2$ of fields.
In fact he proved
that when $R$ is a ring with many units, then
\[
K_2(R)\simeq K_2^M(R).
\]
Here we give a new proof of this fact (Corollary \ref{quillen-k2}).
See \cite[Corollary 4.3]{nes-suslin1990} for another proof of this fact.

Important examples of rings with many units are semilocal rings
which their residue fields are infinite. In particular for an
infinite  field $F$, any commutative finite dimensional $F$-algebra
is a semilocal ring and so it is a ring with many units.

\begin{rem}
Let $R$ be a commutative ring with many units.
It is easy to see that for any $n \ge 1$,
there exist $n$ elements in $R$ such that the sum of each nonempty subfamily
belongs to $\rr$ \cite[Proposition 1.3]{guin1989}. Rings with this
property are considered by Nesterenko and Suslin in
\cite{nes-suslin1990}. For more about rings with many units we refer
to \cite{vdkallen1977}, \cite{nes-suslin1990}, \cite{guin1989} and
\cite{mirzaii2008}.
\end{rem}

In the rest of this article we always assume that $R$ is a
commutative ring with many units, unless it is mentioned otherwise.

\section{Homological interpretation of Bloch group}\label{homological-bloch}

Let $C_l(R^2)$ be the free abelian group with
a basis consisting of $(l+1)$-tuples $(\lan v_0\ran, \dots, \lan v_l\ran)$,
where every $\min\{l+1, 2\}$ of $v_i \in R^2$ is a
basis of a direct summand of $R^2$.
Here by $\lan v_i\ran$ we mean the submodule of $R^2$ generated by $v_i$.
We consider $C_{l}(R^2)$ as a left $\GL_2$-module in a natural way.
If it is necessary we convert this action to the right action
by the definition $m.g:=g^{-1}m$. Let us define a differential operator
\[
\partial_l : C_l(R^2) \arr C_{l-1}(R^2), \ \ l\ge 1,
\]
as an alternating sum of face operators $d_i$, which throw away the
$i$-th component of generators and let
$\partial_0: C_0(R^2) \arr \z$, $\sum_i n_i(\lan v_i\ran) \mt \sum_i n_i$.
The complex
\begin{displaymath}
C_\ast \ : \ \ \
\cdots \larr  C_2(R^2) \overset{\partial_2}{\larr}
C_1(R^2) \overset{\partial_1}{\larr}
C_0(R^2) \overset{\partial_0}{\larr} \z \larr 0
\end{displaymath}
is exact. (This follows from \cite[Lemma 1]{mirzaii2008}).
By applying the functor $H_0$ to
\[
C_{4}(R^2)\arr C_{3}(R^2) \arr \partial_{3}(C_{3}(R^2)) \arr 0,
\]
we get the exact sequence
\[
C_{4}(R^2)_{\GL_2} \arr C_{3}(R^2)_{\GL_2} \arr
H_0(\GL_2, \partial_{3}(C_{3}(R^2))) \arr 0.
\]
For simplicity we use the following standard notation
\[
\infty:=\lan e_1 \ran, \ \  0:=\lan e_2 \ran, \ \ 1:=\lan e_1+e_2 \ran,
\ \ b^{-1}:=\lan e_1+be_2 \ran.
\]
Denote the orbit of the frame
$ (\infty, 0, 1, a^{-1}) \in C_{3}(R^2)$
by $p(a)$ and the orbit of the frame
$(\infty, 0, 1, a^{-1}, b^{-1}) \in C_{4}(R^2)$ by $p(a, b)$,
where $a, 1-a, b, 1-b, a-b \in \rr$. Then
\begin{gather*}
C_{3}(R^2)_{\GL_2}=\coprod_{a} \z.p(a), \ \ \
C_{4}(R^2)_{\GL_2}=\coprod_{a, b} \z.p(a, b).
\end{gather*}
Now a direct computation shows that
\begin{equation}\label{pre-bloch}
H_0(\GL_2, \partial_{3}(C_{3}(R^2)))\simeq \ppp(R).
\end{equation}

{}From the short exact sequence
$0 \arr \partial_1(C_1(R^2)) \arr C_0(R^2) \overset{\partial_0}{\arr} \z \arr 0$
we obtain the connecting homomorphism
$H_{3}(\GL_2) \arr H_2(\GL_2,\partial_1(C_1(R^2)))$.
By iterating this process, we get a homomorphism
\[
\eta': H_3(\GL_2) \arr H_0(\GL_2, \partial_{3}(C_{3}(R^2)))\simeq \ppp(R).
\]
Since $C_0(R^2) \arr \z$ has a $(\rr \times \rr)$-equivariant section $m \mt m(0)$, the
restriction of $\eta'$ to $H_3(\rr \times \rr)$ is zero. Thus we obtain a homomorphism
\[
\eta: H_3(\GL_2)/H_3(\rr \times \rr) \arr \ppp(R).
\]

\begin{prp}\label{homological-Bloch}
Let $R$ be a ring with many units. Then
$\eta$ is injective and $B(R)$ is its homomorphic image. That is
\[
B(R)\simeq H_3(\GL_2)/H_3(\rr \times \rr).
\]
\end{prp}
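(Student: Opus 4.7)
The plan is to analyze the first-quadrant spectral sequence associated to the exact complex $C_\ast \arr \z$,
\[
E^1_{p,q} = H_q(\GL_2, C_p(R^2)) \Longrightarrow H_{p+q}(\GL_2, \z).
\]
By Shapiro's lemma, $E^1_{p,q} = \bigoplus_x H_q(\stabe_x)$ summed over $\GL_2$-orbit representatives on $p$-frames; for $p = 0,1,2,3$ the stabilizers are, respectively, the Borel $B_2$ of upper triangular matrices, the diagonal torus $\rr \times \rr$, the diagonally embedded $\rr$, and the trivial subgroup (indexed by $a \in \rr$ with $1-a \in \rr$). The many-units hypothesis (cf.\ \cite{guin1989}, \cite{nes-suslin1990}) supplies the crucial technical input: the inclusion $\rr \times \rr \harr B_2$ is a homology isomorphism, so Borel homology may be replaced by torus homology throughout.

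The map $\eta'$, constructed via iterated connecting homomorphisms, coincides by construction with the spectral sequence edge map $H_3(\GL_2) \two E^\infty_{3,0} \harr E^1_{3,0} = Q(R)$ followed by the projection $Q(R) \two \ppp(R)$. A direct computation shows $d^1_{3,0} = 0$: the four face maps of $p(a)$ all lie in the single $\GL_2$-orbit of triples, and the alternating signs cancel. An analogous calculation of $d^1 : E^1_{4,0} \arr E^1_{3,0}$ identifies its image with the subgroup of $Q(R)$ generated by the five-term Bloch relations, so the $E^2$-term at position $(3,0)$ is exactly $\ppp(R)$. The heart of the argument is to identify the differential $d^2 : E^2_{3,0} \arr E^2_{1,1}$, after composing with the natural map to $H_1(\rr \times \rr) \simeq \rr \oplus \rr$, with Suslin's map $\lambda$; Lemma \ref{suslin-lemma} is precisely what guarantees that this identification is compatible with the passage to $(\rr \otimes \rr)_\sigma$. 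Degree considerations together with the many-units collapse force the higher differentials from $E^r_{3,0}$, $r \ge 3$, to vanish, yielding $E^\infty_{3,0} \simeq B(R)$.

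The associated filtration on $H_3(\GL_2)$ therefore satisfies $F_3/F_2 \simeq B(R)$, so $\eta'$ descends to an isomorphism $H_3(\GL_2)/F_2 \xrightarrow{\sim} B(R) \harr \ppp(R)$. To finish, one must identify $F_2 H_3(\GL_2)$ with the image of $H_3(\rr \times \rr) \arr H_3(\GL_2)$: the subquotients $E^\infty_{p,3-p}$ for $p=0,1,2$ are subquotients of $H_{3-p}$ of $B_2$, $\rr \times \rr$, and $\rr$ respectively, and under the many-units isomorphism $H_\ast(B_2) \simeq H_\ast(\rr \times \rr)$ each contribution factors through $H_3(\rr \times \rr)$; conversely $H_3(\rr \times \rr)$ lands in $F_2$ because $\rr \times \rr$ stabilizes a point of $C_1$. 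The main obstacle is the bookkeeping needed to pin down $d^2$ precisely and to verify the vanishing of the higher differentials from $E^r_{3,0}$; the matching of $F_2$ with the image of $H_3(\rr \times \rr)$ is essentially formal once the spectral sequence has been understood.
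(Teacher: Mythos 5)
Your overall strategy---run the spectral sequence of a complex of configurations of lines and read off $B(R)$ as the top graded piece of $H_3(\GL_2)$---is the right shape, but you are running it on the \emph{full} complex $C_\ast(R^2)$ rather than on the paper's truncated complex $L_\ast$ (with $L_2=H_1(X)=\ker\partial_1$ in place of $C_2(R^2)$), and that difference is not cosmetic. A first, concrete problem: your identification of the key differential is off by one degree. From $E^2_{3,0}$ the differential $d^2$ lands in $E^2_{1,1}$, a subquotient of $H_1(\rr\times\rr)\simeq\rr\oplus\rr$, and Suslin's $\lambda$ cannot factor through this group: $\lambda$ takes values in $(\rr\otimes\rr)_\sigma$, a quotient of the $\rr\otimes\rr$ summand of $H_2(\rr\times\rr)$. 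In fact $E^2_{1,1}=0$ for trivial reasons (the kernel of $\alpha_\ast-\id$ on $\rr\oplus\rr$ is the diagonal, which is exactly the image of $d^1_{2,1}$ from the scalar stabilizer of a triple of lines), so $d^2_{3,0}$ vanishes automatically; the differential that realizes $\lambda$ is $d^3_{3,0}\colon E^3_{3,0}\arr E^3_{0,2}\simeq H_2(\rr)\oplus(\rr\otimes\rr)_\sigma$, i.e.\ precisely one of the ``higher differentials from $E^r_{3,0}$, $r\ge 3$'' that you declare to vanish by degree considerations. This is repairable by reshuffling, but as written the heart of your argument computes the wrong map.

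The more serious issue is the step you call ``essentially formal'': identifying $F_2H_3(\GL_2)$ with $\im\big(H_3(\rr\times\rr)\arr H_3(\GL_2)\big)$. In the full spectral sequence the graded piece $E^\infty_{1,2}$ is a subquotient of $H_2(\rr\times\rr)$ which is \emph{not} zero in general ($E^2_{1,2}=\ker(\alpha_\ast-\id)/\im(d^1_{2,2})$ contains Tate-cohomology-type classes for the swap acting on $\rr\otimes\rr$), and it contributes the extension $0\arr F_0\arr F_1\arr E^\infty_{1,2}\arr 0$. Knowing that $E^\infty_{1,2}$ is a subquotient of $H_2(\rr\times\rr)$ says nothing about whether the elements of $F_1$ lying over it come from $H_3(\rr\times\rr)$; there is no natural map from $H_2$ of the torus to $H_3$ of the torus through which this ``contribution factors.'' This is exactly the subtlety that forces Suslin's Theorem 2.1 in \cite{suslin1991} to be stated with the monomial group $\GM_2$ rather than the torus, and it is the reason the paper truncates the complex: with $L_2=H_1(X)$, Lemma \ref{e-1q} proves $E^2_{1,q}=0$ outright by an explicit chain-level construction involving $\partial_2(\infty,0,1)$ (not by degree reasons), so that $F_1=F_0=\im H_3(\rr\times\rr)$ and the top quotient is $E^\infty_{2,1}=\ker(d^2_{2,1})=B(R)$. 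Without an argument killing $E^\infty_{1,2}$, or showing its contribution to $F_1$ is absorbed by the torus, your proof only yields $B(R)\simeq H_3(\GL_2)/F_2$ with $F_2$ a priori strictly larger than $\im H_3(\rr\times\rr)$ --- which is the statement Suslin proved, not the one asserted here.
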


We will postpone the proof of this proposition to Section \ref{Hom-Bloch}.

\begin{rem}
Let $\GM_2$ denotes the subgroup of monomial matrices in $\GL_2$.
By a well known result of Suslin, for an infinite field $F$,
\[
B(F)\simeq H_3(\GL_2(F))/H_3(\GM_2(F)),
\]
 \cite[Theorem 2.1]{suslin1991}.
In \cite[Remark 2.1]{suslin1991}, Suslin claims
that most of the times $\im\Big(H_3(\GM_2(F)) \arr H_3(\GL_2(F))\Big)$ is strictly
larger than  $\im\Big(H_3(\fff \times \fff) \arr H_3(\GL_2(F))\Big)$! It appears that
this claim is not true due to Proposition \ref{homological-Bloch} above.
\end{rem}

\section{The spectral sequence}

Consider the complex
\[
0 \larr H_1(X) \overset{j}{\larr} C_1(R^2) \larr C_0(R^2) \larr 0,
\]
where $H_1(X):=\ker(\partial_1)$. (See Remark 1.1 in \cite{mirzaii2007}
for an explanation for the choice of the notation).
Set $L_i=C_i(R^2)$ for  $i=0,1$, and $L_2=H_1(X)$.

Let $F_\ast \arr \z$ be the standard resolution of $\z$ over $\GL_2$
\cite[Chapter I, \S 5]{brown1994}:
\[
\cdots \larr
F_2 \overset{\delta_2}{\larr} F_1
\overset{\delta_1}{\larr} F_0 \overset{\epsilon}{\larr} \z \larr 0.
\]
From  the double complex $C_{\ast,\ast}:=F_\ast \otimes_{\GL_2} L_\ast$,
$C_{p,q}:=F_q \otimes_{\GL_2} L_p$,
one obtains a first quadrant spectral sequence
\[
E_{p, q}^1=\begin{cases}
H_q(\GL_2, C_p(R^2)) & \text{if $p=0, 1$ }\\
H_q(\GL_2, H_1(X)) & \text{if $p=2$ }\\
0 & \text{if $p \ge 3$}\end{cases}
\Rightarrow H_{p+q}(\GL_2).
\]
Using the Shapiro lemma \cite[Chapter III,
Proposition~6.2]{brown1994} and a theorem of Suslin
\cite[Theorem~1.9]{suslin1985}, \cite[2.2.2]{guin1989},
\[
E_{p, q}^1\simeq H_q(\rr \times \rr), \ {\rm for}\  p=0,1.
\]
It is not difficult to see that
\[
d_{1, q}^1=H_q(\alpha) - H_q(\id),
\]
where $\alpha: \rr \times \rr \arr \rr \times \rr$, $(a, b) \mt (b, a)$.

\begin{lem}\label{e-1q}
$E_{1, q}^2=0$.
\end{lem}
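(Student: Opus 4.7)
The plan is to identify the terms and differentials explicitly using Shapiro's lemma and Suslin's theorem, and then to construct enough preimages in $E^1_{2,q}$ to exhaust the kernel of $d^1_{1,q}$. First I would use the identifications $E^1_{0,q} \simeq H_q(T)$ and $E^1_{1,q} \simeq H_q(T)$ (with $T = \rr \times \rr$) already recorded before the lemma, together with the stated formula $d^1_{1,q} = H_q(\alpha) - H_q(\id)$ for the swap $\alpha$, to reduce the claim to showing that every element of $H_q(T)^\alpha$ lies in the image of $d^1_{2,q} : H_q(\GL_2, H_1(X)) \to H_q(T)$ induced by $j : L_2 \hookrightarrow L_1$.

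The first main step is to construct a $\GL_2$-equivariant ``symmetrization'' map $\tilde\sigma : L_1 \to L_2$ defined on generators by $(\lan v_0 \ran, \lan v_1 \ran) \mt (\lan v_0 \ran, \lan v_1 \ran) + (\lan v_1 \ran, \lan v_0 \ran)$; since reversing the order of the pair flips the sign of $\partial_1$, the image lies in $\ker \partial_1 = L_2$. The composite $j \circ \tilde\sigma : L_1 \to L_1$ is the symmetrizer $\id + \sigma$, and since the swap $\sigma$ is implemented by the monomial matrix $w = \mtx{0}{1}{1}{0}$ (an inner automorphism of $\GL_2$), the induced map on $H_q(T)$ is precisely $\id + H_q(\alpha)$. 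Therefore $\im d^1_{2,q} \supseteq (\id + H_q(\alpha)) H_q(T)$, and since $(\id + H_q(\alpha))(x) = 2x$ for any $x \in H_q(T)^\alpha$, the quotient $E^2_{1,q}$ is at least annihilated by $2$.

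The second step is to kill the residual $2$-torsion. The cokernel $H_q(T)^\alpha/(\id + H_q(\alpha))H_q(T)$ is the Tate cohomology $\hat H^0(\lan \alpha \ran, H_q(T))$, and by the K\"unneth decomposition $H_q(T) = \bigoplus_{i+j=q} H_i(\rr) \otimes H_j(\rr) \oplus \text{Tor}$ the off-diagonal summands (those with $i \ne j$) are $\lan \alpha \ran$-induced and hence cohomologically trivial, so only the middle summand $H_{q/2}(\rr) \otimes H_{q/2}(\rr)$ (for $q$ even) can contribute. To cover this piece I would use additional cycles in $L_2 = \partial_2(C_2(R^2))$ coming from the boundary of triangles: the group $\GL_2$ acts transitively on $C_2(R^2)$ with stabilizer the scalar matrices $\rr \cdot I \simeq \rr$, so Shapiro's lemma gives $H_q(\GL_2, C_2(R^2)) \simeq H_q(\rr)$, and a direct computation shows that the induced composite $H_q(\rr) \arr H_q(\GL_2, L_2) \overset{d^1_{2,q}}{\arr} H_q(T)$ is the diagonal embedding $\text{diag}_\ast$. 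Combining $(\id + H_q(\alpha)) H_q(T)$ with $\text{diag}_\ast H_q(\rr)$ and exploiting the Hopf-algebra structure of $H_\ast(\rr)$ (in particular the primitive and reduced parts of the coproduct) should exhibit lifts of all remaining $\alpha$-invariants. The hard part of the argument is exactly this last verification, namely that diagonal plus symmetrization really exhaust $H_q(T)^\alpha$; this is the step where the many-units hypothesis does its essential work, since it is what guarantees both the Suslin identification $H_\ast(B) \simeq H_\ast(T)$ and the abundance of frames in $C_\ast(R^2)$ needed to produce enough cycles in $H_1(X)$.
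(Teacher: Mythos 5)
Your first step is correct and is a genuinely nice observation not made explicit in the paper: the equivariant symmetrization $(\lan v_0\ran,\lan v_1\ran)\mapsto(\lan v_0\ran,\lan v_1\ran)+(\lan v_1\ran,\lan v_0\ran)$ does land in $\ker\partial_1=H_1(X)$, its composite with $j$ induces $\id+H_q(\alpha)$ on $E^1_{1,q}\simeq H_q(\rr\times\rr)$, and hence $E^2_{1,q}$ is annihilated by $2$. But the second step, which you yourself flag as unverified, is a genuine gap, and the specific cycles you propose do not close it. Take $q=2$ and $R=\q$ (or any field in which $-1$ is not a square). Under $H_2(\rr\times\rr)\simeq H_2(\rr)\oplus H_2(\rr)\oplus\rr\otimes\rr$ the paper records $d^1_{1,2}(r,s,a\otimes b)=(-r+s,r-s,-b\otimes a-a\otimes b)$, so the class $\xi=(0,0,(-1)\otimes(-1))$ lies in $\ker d^1_{1,2}$ (it is $2$-torsion and fixed by the swap), yet its third component lies in the summand $\{\pm1\}\otimes\{\pm1\}\simeq\z/2$ on which $\im(\id+H_2(\alpha))$ has trivial projection. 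Your second family of cycles does not help: one computes $\mathrm{diag}_*(a\wedge b)=(a\wedge b,\,a\wedge b,\,a\otimes b-b\otimes a)=(\id+H_2(\alpha))\bigl((a\wedge b,0,a\otimes b)\bigr)$, so for $q=2$ the diagonal image is already contained in $\im(\id+H_2(\alpha))$ and contributes nothing new. No amount of Hopf-algebra bookkeeping in $H_*(\rr)$ can repair this, because you are only ever producing elements of $\im d^1_{2,q}$ that factor through $H_q(\GL_2,C_2(R^2))\simeq H_q(\rr)$, and the map $H_q(\GL_2,C_2(R^2))\to H_q(\GL_2,H_1(X))$ is far from surjective (already for $q=1$ its source is $\rr$ while the target is $\ppp(R)\oplus\rr$).

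The paper's proof sidesteps all of this by arguing at the chain level with an arbitrary representative: given a cycle $x\otimes(\infty,0)$ whose $d^1_{1,q}$-image is a boundary $\delta_{q+1}(y)\otimes(\infty)$, the element $z=x\otimes\partial_2(\infty,0,1)\in F_q\otimes_{\GL_2}H_1(X)$ is shown to map under $\id\otimes j$ to $x\otimes(\infty,0)+\delta_{q+1}(y)\otimes(\infty,0)$, using that the two extra faces of the triangle $(\infty,0,1)$ are translates of $(\infty,0)$ by elements congruent to $1$ and $w$ modulo $\stabe_{\GL_2}(\infty)$. The idea you are missing is that the triangle relation $\partial_2(\infty,0,1)$ must be paired with the \emph{given} chain $x$, not merely with chains supported on the (scalar) stabilizer of the triangle; this produces exactly the preimages in $E^1_{2,q}$ that your two families cannot see. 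Any repair of your outline along these lines essentially reproduces the paper's argument.
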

\begin{proof}
Consider the following commutative diagram with exact columns
\[
\xymatrix{
F_{q+1} \otimes_{\GL_2}   H_{1}(Y) \ar[r] \ar[d] &
F_{q}   \otimes_{\GL_2}   H_{1}(Y) \ar[r] \ar[d]&
F_{q-1} \otimes_{\GL_2}   H_{1}(Y) \ar[d]\\
F_{q+1} \otimes_{\GL_2}   C_{1}(R^2) \ar[r] \ar[d] &
F_{q}   \otimes_{\GL_2}   C_{1}(R^2) \ar[r] \ar[d] &
F_{q-1} \otimes_{\GL_2}   C_{1}(R^2) \ar[d] \\
F_{q+1} \otimes_{\GL_2}   C_{0}(R^2) \ar[r] &
F_{q} \otimes_{\GL_2}     C_{0}(R^2)\ar[r] &
F_{q-1} \otimes_{\GL_2}   C_{0}(R^2).
}
\]
Let $ x \otimes (\infty, 0)   \in  F_q \otimes_{\GL_2} C_1(R^2)$ represents an
element of $H_q(\GL_2, C_1(R^2)) \simeq  H_q(\rr \times \rr)$ such that
$d_{1,q}^1(\overline{x \otimes (\infty, 0)})=0$. So there exists $y \in F_{q+1}$
such that $\del_{q+1}(y)\otimes (\infty) = x \otimes \partial_1(\infty, 0)=
(xw-x) \otimes (\infty)$, where $w=\mtx 0 1 1 0$. From the isomorphism
\[
F_{q} \otimes_{\stabe_{\GL_2}(\infty)}\z \overset{\simeq}{\larr}
F_{q} \otimes_{\GL_2} C_{0}(R^2), \ \ s \otimes 1 \mt s \otimes (\infty),
\]
we obtain $\del_{q+1}(y) \otimes 1= (xw-x) \otimes 1\in
F_{q} \otimes_{\stabe_{\GL_2}(\infty)}\z$. If
$z=x \otimes \partial_{2}(\infty, 0,1)$, then
\[
(\id_{F_q} \otimes j)(z)=x \otimes (\infty, 0) + (xwg-xg) \otimes (\infty, 0).
\]
where $j:H_{1}(Y) \harr C_{1}(R^2)$ and
$g=\mtx 1 {-1} 0 1$. Consider the natural map
\[
F_{q} \otimes_{\stabe_{\GL_2}(\infty)}\z \larr
F_{q}\otimes_{\GL_2} C_{1}(R^2), \ \ s \otimes 1 \mt s \otimes (\infty, 0).
\]
Since $(xw-x) \otimes 1=(xwg-xg) \otimes 1$ in
$F_{q} \otimes_{\stabe_{\GL_2}(\infty)}\z$, by the above map we have
$\del_{q+1}(y) \otimes (\infty, 0)=(xwg -xg) \otimes (\infty, 0)
\in F_{q} \otimes_{\GL_2} C_{1}(R^2)$. Therefore
\begin{gather*}
(\id_{F_q} \otimes j)(z)
=x \otimes (\infty, 0)+ \delta_{q+1}(y) \otimes  (\infty, 0).
\end{gather*}
This completes the proof of the triviality of ${E}_{1, q}^2$.
\end{proof}

From the short exact sequence
$0\arr \partial_{3}(C_{3}(R^2)) \arr C_2(R^2)
\overset{\partial_2}{\arr} H_{1}(X) \arr 0$
one obtains the long exact sequence
\begin{gather*}
\cdots \arr H_1(\GL_2,C_2(R^2))
\overset{H_1(\partial_2)}{\larr}  H_1(\GL_2, H_{1}(X)) \arr
H_0(\GL_2, \partial_{3}(C_{3}(R^2))) \\
\arr H_0(\GL_2,C_2(R^2)) \overset{H_0(\partial_2)}{\larr}
H_0(\GL_2, H_{1}(X)) \arr 0.
\end{gather*}

It is easy to see that $H_0(\GL_2, H_{1}(X))\simeq\z$ (see formula (3)
in \cite{mirzaii2007}) and $d_{2,0}^1=\id$. Thus $E_{2,0}^2=0$.
The composition
\begin{gather*}
\hspace{-4 cm}
\rr \simeq H_1(\GL_2,C_2(R^2)) \overset{H_1(\partial_2)}{\larr}
H_1(\GL_2, H_{1}(X))\\
\hspace{6 cm}
\overset{H_1(j)}{\larr} H_1(\GL_2,C_{1}(R^2))\simeq \rr \times \rr
\end{gather*}
is given by $H_1(j)\circ H_1(\partial_2)(a)=(a, a)$.
Thus $H_1(\partial_2)$ is injective.
Since $H_0(\GL_2,C_2(R^2))\simeq \z$ and $H_0(\partial_2)=\id$,
the above results, together with (\ref{pre-bloch}),
imply the exact sequence
\begin{equation}\label{mirzaii-exact}
0 \arr \rr \arr H_1(\GL_2, H_{1}(X)) \arr \ppp(R) \arr 0.
\end{equation}
By Lemma \ref{e-1q},
$H_1(\GL_2, H_1(X)) \arr \ker(d_{1,1}^1) \simeq \rr$ is surjective
and one can see without any difficulty that this map splits the
exact sequence (\ref{mirzaii-exact}). Therefore
\begin{equation}\label{mirzaii-split}
H_1(\GL_2, H_{1}(X))\simeq \ppp(R) \oplus \rr.
\end{equation}

Thus we proved the following.

\begin{lem}\label{e-21}
$E_{2, 1}^2\simeq \ppp(R)$.
\end{lem}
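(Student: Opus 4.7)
The plan is to read off $E_{2,1}^2$ as the kernel of $d_{2,1}^1 \colon E_{2,1}^1 \to E_{1,1}^1$, since $E_{p,q}^1 = 0$ for $p \ge 3$ means there is no incoming differential at this position. The $E^1$-page at bidegrees $(2,1)$ and $(1,1)$ has already been identified:
\[
E_{2,1}^1 = H_1(\GL_2, H_1(X)), \qquad E_{1,1}^1 \simeq \rr \times \rr,
\]
and the differential $d_{2,1}^1$ is induced by the inclusion $j \colon H_1(X) \hookrightarrow C_1(R^2)$.

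The first step is to recall from the splitting (\ref{mirzaii-split}) that
\[
H_1(\GL_2, H_1(X)) \simeq \ppp(R) \oplus \rr,
\]
where the $\rr$-summand is the image of $H_1(\partial_2) \colon H_1(\GL_2, C_2(R^2)) \to H_1(\GL_2, H_1(X))$, while the quotient $\ppp(R)$ comes from the exact sequence (\ref{mirzaii-exact}).

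The second step is to pin down the image of $d_{2,1}^1$. By Lemma \ref{e-1q}, $E_{1,1}^2 = 0$, so $d_{2,1}^1$ surjects onto $\ker(d_{1,1}^1)$. The differential $d_{1,1}^1 = H_1(\alpha) - H_1(\id) \colon \rr \times \rr \to \rr \times \rr$ is $(a,b) \mapsto (b-a, a-b)$, whose kernel is the diagonal $\{(a,a) : a \in \rr\} \simeq \rr$. Moreover the composition
\[
\rr \simeq H_1(\GL_2, C_2(R^2)) \xrightarrow{H_1(\partial_2)} H_1(\GL_2, H_1(X)) \xrightarrow{H_1(j)} \rr \times \rr
\]
was already computed to be the diagonal $a \mapsto (a,a)$; hence the $\rr$-summand of the splitting (\ref{mirzaii-split}) is carried isomorphically onto $\ker(d_{1,1}^1)$ by $d_{2,1}^1$.

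The third and final step is then a formal consequence: $d_{2,1}^1$ restricts to an isomorphism on the $\rr$-summand and therefore its kernel is precisely the complementary direct summand $\ppp(R)$. This gives $E_{2,1}^2 \simeq \ppp(R)$. There is no real obstacle here; the genuine technical work already lies in Lemma \ref{e-1q} and in the isomorphism (\ref{pre-bloch}) identifying $H_0(\GL_2, \partial_3(C_3(R^2)))$ with the pre-Bloch group, both of which are available.
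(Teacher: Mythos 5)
Your proposal is correct and takes essentially the same route as the paper: the paper also obtains the result by combining the exact sequence (\ref{mirzaii-exact}), the vanishing $E_{1,1}^2=0$ from Lemma \ref{e-1q} (which shows the corestriction of $d_{2,1}^1$ to $\ker(d_{1,1}^1)\simeq \rr$ is a surjection splitting that sequence), and the computation $H_1(j)\circ H_1(\partial_2)(a)=(a,a)$, so that $\ker(d_{2,1}^1)$ is the complement of the $\rr$-summand in (\ref{mirzaii-split}). The only point worth making explicit, which you do, is that $E_{p,q}^1=0$ for $p\ge 3$ forces $E_{2,1}^2=\ker(d_{2,1}^1)$.
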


In order to describe a map $\ppp(R) \arr H_1(\GL_2, H_{1}(X))$ that splits
the exact sequence (\ref{mirzaii-exact}),  we look at the
following commutative diagram with exact rows
\[
\begin{array}{cccccccc}
0 \arr \!\! &\!\! F_1 \otimes_{\GL_2} \partial_3(C_3(R^2)) \!\!
&\!\! \larr\!\!& \!\! F_1  \otimes_{\GL_2} C_2(R^2)\!\! &  \!\!
\overset{ 1 \otimes \partial_2 }{\larr} \!\! &
\!\! F_1 \otimes_{\GL_2} H_1(X) \!\! & \!\! \arr 0 \\
&\Bigg\downarrow\vcenter{%
\rlap{$\scriptstyle{}$}}
&       & \Bigg\downarrow\vcenter{%
\rlap{$\scriptstyle{ \delta_1} \otimes 1 $}}
&  &\Bigg\downarrow{%
\rlap{$\scriptstyle{}$}} \\
0  \arr \!\! & \!\! F_0 \otimes_{\GL_2}  \partial_3(C_3(R^2))
\!\!&\!\! {\larr}\!\!& \!\! F_0 \otimes_{\GL_2} C_2(R^2)\!\!
&  \!\! \larr \!\!& \!\! F_0 \otimes_{\GL_2}H_1(X) \!\!  & \!\! \arr 0.
\end{array}
\]
The element $[a] \in \ppp(R)$ comes from
\begin{gather*}
\hspace{-2 cm}
x_a:=(1) \otimes \partial_3(\infty, 0, 1, a^{-1})
 \in F_0
\otimes_{\GL_2} \partial_3\Big(C_3(R^2)\Big) \\
\ \ \
= \bigg[(g_1)-(g_2)+(g_3)-(1)\bigg] \otimes (\infty, 0, 1)
\in F_0\otimes_{\GL_2} C_2(R^2),
\end{gather*}
where
\[
g_1=\mtx 0 1 {a-1} 1, \ \ g_2=\mtx  {1-a} a 0 a ,\ \ g_3= \mtx 1 0 0 a.
\]
The element
\[
y_a:=  \bigg[(g_2, g_1)-(g_3, 1)\bigg] \otimes (\infty, 0, 1)
\in F_1\otimes_{\GL_2} C_2(R^2)
\]
maps to $x_a$. And finally $y_a$ maps to
\[
z_a:=
\bigg[(g_2, g_1)-(g_3, 1)\bigg] \otimes \partial_2(\infty, 0, 1)
\in F_1 \otimes_{\GL_2} H_1(X).
\]
Thus a splitting map $\ppp(R) \arr  H_1(\GL_2, H_{1}(X))$
can be given by sending $[a]$ to the
element of $H_1(\GL_2, H_{1}(X))$ represented by $z_a$.

Putting all these together, the ${E}^2$-terms of our
spectral sequence look as follow
\begin{gather*}
\begin{array}{lccccc}
\ast        &       &          &     &  & \\
E_{0,3}^2   &   0   &  \ast    &      & &\\
E_{0,2}^2   &   0   &  \ast    &  0  &  &\\
\rr         &   0   &  \ppp(R) &  0  & 0 &\\
\z          &   0   &  0       &  0  & 0 & .
\end{array}
\end{gather*}
By the K\"unneth theorem,
$H_2(\rr \times \rr) \simeq H_2(\rr) \oplus H_2(\rr) \oplus \rr \otimes \rr$.
An easy calculation shows
\begin{gather*}
d_{1, 2}^1: E_{1, 2}^1=H_2(\rr \times \rr) \arr E_{0, 2}^1= H_2(\rr \times \rr)\\
\ \ \ \ \ \ \ \ \ \ \
(r, s, a \otimes b) \mt (-r+s, r-s, -b \otimes a-a \otimes b).
\end{gather*}
Therefore $E_{0, 2}^1 \simeq H_2(\rr) \oplus (\rr \otimes \rr)_\sigma$.
On the other hand, since for an abelian group $A$, $H_2(A)\simeq \bigwedge{\!\!{}^2} A$
(see Lemma \ref{sublemma} below), we have
\[
E_{0,2}^2 \simeq \bigwedge{\!\!{}^2}(\rr \times \rr)/K,
\]
where
$K=\lan (b,a)\wedge (d,c) -(a,b)\wedge (c,d)|a, b, c,d \in \rr \ran$.
Now an isomorphism
\begin{equation}\label{E-02}
H_2(\rr) \oplus (\rr \otimes \rr)_\sigma \overset{\simeq}{\larr}
\bigwedge{\!\!{}^2}(\rr \times \rr)/K
\end{equation}
can be given by
\[
(a\wedge b, c\otimes d) \mt (a,1)\wedge (b,1)+ (c,1)\wedge (1,d).
\]

Here we introduce a notion that will be used later. Let $G$ be a group and let
\[
{\rm \bf{c}}({g}_1, {g}_2,\dots, {g}_n):=\sum_{\si \in \s_n}
{{\rm sign}(\si)}[{g}_{\si(1)}| {g}_{\si(2)}|\dots|{g}_{\si(n)}]
\in H_n(G),
\]
where ${g}_i \in G$ pairwise commute and $\s_n$ is the symmetric
group of degree $n$. Here we use the bar resolution of $G$
\cite[Chapter I, Section 5]{brown1994} to define the homology of $G$.

\begin{lem}\label{sublemma}
Let $G$ and $G'$ be two groups.
\par {\rm (i)} If $h_1\in G$ commutes with all the elements
$g_1, \dots, g_n \in G$, then
\[
{\rm \bf{c}}(g_1h_1, g_2,\dots, g_n)=
{\rm \bf{c}}(g_1, g_2,\dots, g_n)+{\rm \bf{c}}(h_1, g_2,\dots, g_n).
\]
\par {\rm (ii)}
For every $\sigma \in \s_n$,
${\rm \bf{c}}(g_{\sigma(1)},\dots, g_{\sigma(n)})={\rm sign(\sigma)}
{\rm \bf{c}}(g_1, g_2,\dots, g_n)$.
\par {\rm (iii)}
The shuffle product of ${\rm \bf{c}}(g_1,\dots, g_p)\in H_p(G, \z)$
and ${\rm \bf{c}}(g_1',\dots, g_q') \in H_q(G',\z)$ is
${\rm \bf{c}}((g_1, 1), \dots, (g_p,1),(1,g_1'), \dots, (1,g_q'))
\in H_{p+q}(G \times G')$.
\par {\rm (iv)} If $A$ is an abelian group, then the map
$\bigwedge{\!\!{}^2} A \arr H_2(A)$ given by $a \wedge b \mt {\rm \bf{c}}(a,b)$
is an isomorphism.
\end{lem}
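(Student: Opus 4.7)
The plan is to dispatch the four parts in the order (ii), (iii), (i), (iv), since each later part leans on earlier ones. Part (ii) is just a reindexing of the defining sum: substituting $\tau = \si \pi$ for the summation index $\pi$ and using ${\rm sign}(\pi) = {\rm sign}(\si){\rm sign}(\tau)$ produces the identity at once. Part (iii) is a bookkeeping argument at the chain level using the Eilenberg--Zilber shuffle formula. Expanding the shuffle product of the two cycles produces a triple sum over $\s_p \times \s_q \times \mathrm{Sh}(p,q)$ carrying the total sign ${\rm sign}(\alpha){\rm sign}(\beta){\rm sign}(\si)$, and the standard sign-preserving bijection $\s_p \times \s_q \times \mathrm{Sh}(p,q) \simeq \s_{p+q}$ reassembles it into the alternating sum defining ${\rm \bf c}((g_1,1), \dots, (g_p,1), (1,g_1'), \dots, (1,g_q'))$.

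For part (i), all elements involved lie in the abelian subgroup $A = \lan h_1, g_1,\dots,g_n\ran \se G$, so the computation may be carried out inside $H_n(A)$. Applying (iii) with $G = G' = A$ repeatedly and then pushing forward along iterated multiplications $A^k \arr A$ identifies ${\rm \bf c}(a_1,\dots,a_n)$ with the iterated Pontryagin product of the classes $[a_i] \in H_1(A) \simeq A$. Bilinearity of the Pontryagin product then yields additivity in the first slot. If one prefers a direct chain-level argument, one constructs an explicit $(n+1)$-bar-chain whose boundary equals the stated difference, using the commutation of $h_1$ with each $g_j$ to cancel interior face terms pairwise.

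Part (iv) is the one nontrivial assertion. By (i) and (ii) the map $(a,b) \mt {\rm \bf c}(a,b)$ is bilinear and alternating, hence descends to $\bigwedge{\!\!{}^2} A \arr H_2(A)$. To show this descended map is an isomorphism I would reduce to $A$ finitely generated by direct limits, decompose $A$ as a finite sum of cyclic groups via the structure theorem, and then apply the Künneth formula iteratively: the cross-term $H_1(A') \otimes H_1(A'')$ matches the mixed piece of $\bigwedge^2(A' \oplus A'')$, and the map is checked directly on $\z$ and $\z/n$. Parts (i)--(iii) reduce to formal bookkeeping once the shuffle/Pontryagin formalism is in place, but (iv) genuinely needs the classical $H_2$-computation for abelian groups, and that is where the main obstacle sits.
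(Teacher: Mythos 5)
Your proposal is correct and supplies exactly the argument the paper leaves implicit (its proof reads only ``The proof is standard''): reindexing the defining sum for (ii), the sign-preserving bijection $\s_p\times\s_q\times\mathrm{Sh}(p,q)\simeq\s_{p+q}$ for (iii), reduction to the Pontryagin product on the abelian subgroup generated by the commuting elements for (i), and filtered colimits plus the structure theorem and K\"unneth for (iv). One small touch-up: to descend to $\bigwedge{\!\!{}^2}A$ in (iv) you need $\mathbf{c}(a,a)=0$ exactly, which follows from the definition ($[a|a]-[a|a]=0$) rather than from the antisymmetry of (ii), which only yields $2\,\mathbf{c}(a,a)=0$.
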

\begin{proof}
The proof is standard.
\end{proof}

\section{Proof of Proposition \ref{homological-Bloch}}\label{Hom-Bloch}

\begin{lem}
The differential map $d_{2,1}^2:\ppp(R) \arr
H_2(\rr) \oplus (\rr \otimes \rr)_\sigma $ is given by
$[a] \mt \bigg( a \wedge (1-a), - a\otimes (1-a) \bigg)$.
\end{lem}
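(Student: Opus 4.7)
To compute $d^2_{2,1}([a])$, I apply the chain-level zigzag construction for the $E^2$-differential of the double complex $C_{p,q}=F_q\otimes_{\GL_2}L_p$. Starting from the representative $z_a=[(g_2,g_1)-(g_3,1)]\otimes\partial_2(\infty,0,1)\in F_1\otimes_{\GL_2}H_1(X)$ constructed just before the lemma, I first push horizontally via $\id\otimes j$ to obtain
\[
\alpha:=(\id\otimes j)(z_a)=[(g_2,g_1)-(g_3,1)]\otimes[(0,1)-(\infty,1)+(\infty,0)]\in F_1\otimes_{\GL_2}C_1(R^2);
\]
then find $\beta\in F_2\otimes_{\GL_2}C_1(R^2)$ with $(\delta_2\otimes\id)(\beta)=\alpha$, which exists because the class $H_1(j)[z_a]$ vanishes in $E^1_{1,1}\simeq\rr\times\rr$ by the splitting analysis leading to (\ref{mirzaii-split}); finally $d^2_{2,1}([a])$ is the class of $(\id\otimes\partial_1)(\beta)$ in $E^2_{0,2}$.

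To make the computation concrete, I use Shapiro's lemma to identify $F_\ast\otimes_{\GL_2}C_1(R^2)\simeq F_\ast\otimes_T\z$, where $T=\rr\times\rr$ is the stabilizer of $(\infty,0)$, and identify $H_\ast(\GL_2,C_0(R^2))\simeq H_\ast(B)\simeq H_\ast(T)$, via Shapiro for the stabilizer $B$ of $\infty$ followed by the Suslin/Guin theorem for the projection $B\twoheadrightarrow T$. The computation is driven by three matrix factorizations: $g_3=\diag(1,a)\in T$; and with $s=\mtx 0 1 1 1$, $t=\mtx 1 1 0 1$ (so $s(\infty,0)=(0,1)$ and $t(\infty,0)=(\infty,1)$), one verifies $s^{-1}g_1=\diag(a-1,1)\in T$ and $t^{-1}g_2=\diag(1-a,a)\in T$.

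Using the commutativity of $T$ and the shuffle product formula (Lemma \ref{sublemma}(iii)), I would construct $\beta$ as an explicit combination of bar-resolution 2-cells attached to commuting pairs of diagonal elements. Under the K\"unneth isomorphism $H_2(\rr\times\rr)\simeq H_2(\rr)\oplus H_2(\rr)\oplus(\rr\otimes\rr)$, 2-cells of the form ${\rm\bf{c}}((u,1),(v,1))$, ${\rm\bf{c}}((1,u),(1,v))$ and ${\rm\bf{c}}((u,1),(1,v))$ correspond to the three summands. After applying $\id\otimes\partial_1$, projecting $B\twoheadrightarrow T$, and killing the image of $d^1_{1,2}$, the resulting class is ${\rm\bf{c}}((a,1),(1-a,1))-{\rm\bf{c}}((a,1),(1,1-a))$, which corresponds under the isomorphism (\ref{E-02}) to $(a\wedge(1-a),-a\otimes(1-a))\in H_2(\rr)\oplus(\rr\otimes\rr)_\sigma$, as required.

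The principal obstacle is the explicit construction of $\beta$ and careful sign-tracking across Shapiro's lemma, the Suslin/Guin identification, and the K\"unneth decomposition; the overall shape of the answer, however, is essentially forced by the three diagonal factorizations above.
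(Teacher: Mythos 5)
Your outline coincides with the paper's proof: both compute $d^2_{2,1}([a])$ by the chain-level staircase starting from $z_a$, pushing into $F_1\otimes_{\GL_2}C_1(R^2)$ via $1\otimes j$, lifting along $\delta_2\otimes 1$, applying $1\otimes\partial_1$, and then reading off the class in $E_{0,2}^2$ through the Shapiro identification $H_2(\GL_2,C_0(R^2))\simeq H_2(B)$, the retraction $B\arr\rr\times\rr$, and the passage to the bar resolution. Your three factorizations are correct (indeed $s^{-1}g_1=\diag(a-1,1)$ and $t^{-1}g_2=\diag(1-a,a)$ for your $s$, $t$, and $g_3=\diag(1,a)$), and they do explain why $a$ and $1-a$ enter the answer.

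The gap is that the decisive step is only announced, not performed: you never exhibit the lift $\beta$. In the paper this is the eight-term chain $u_a\in F_2$ satisfying $\delta_2(u_a)\otimes(\infty,0)=(1\otimes j)(z_a)$, whose construction rests on nontrivial identities such as $a^{-1}g_2g_1=g_1^2g_3^{-1}$; and even once $u_a$ is found, $(u_aw-u_a)\otimes(\infty)$ unpacks to a sixteen-term bar expression $X_a$ that must be reduced using an explicit $\delta_3$-boundary before it collapses to $(a,1)\wedge(1-a,1)-(a,1)\wedge(1,1-a)$. Nothing in the three factorizations forces this outcome: a priori the class in $E_{0,2}^2$ could carry extra contributions to the $H_2(\rr)$ summands or a different sign on the $(\rr\otimes\rr)_\sigma$ part, and ruling that out is precisely the content of the computation you defer. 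A secondary soft spot: you justify the existence of $\beta$ by appeal to the splitting (\ref{mirzaii-split}), but that only shows that the class $[a]\in E^2_{2,1}$ has \emph{some} cycle representative in $\ker(H_1(j))$; that the specific cycle $z_a$ lies there (rather than differing from the correct representative by an element of the $\rr$ summand) is itself established only by producing $u_a$. So the proposal is a correct plan, identical in outline to the paper's, but the computation that constitutes the proof is missing.
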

\begin{proof}
Here we argue as in \cite[pp. 189-190]{hutchinson1990} or
\cite[pp 192-193]{dupont-sah1982}.
Consider the following commutative diagram with exact rows
\[
\begin{array}{cccccccc}
 0 \!\! &\!\! \larr \!\! &\!\! F_2 \otimes_{\GL_2} H_1(X) \!\!&\!\! \larr\!\!
& \!\! F_2 \otimes_{\GL_2} C_1(R^2) \!\! &  \!\!
\overset{1 \otimes \partial_1}{\larr} \!\! & \!\!F_2 \otimes_{\GL_2} C_0(R^2) \\
&  &\Bigg\downarrow\vcenter{%
\rlap{$\scriptstyle{}$}}
&       & \Bigg\downarrow\vcenter{%
\rlap{$\scriptstyle{ \delta_2} \otimes 1$}}
&  &\Bigg\downarrow{%
\rlap{$\scriptstyle{}$}} \\
0 \!\! &\!\! \larr \!\! &\!\! F_1 \otimes_{\GL_2} H_1(X)
 \!\!&\!\! \overset{1\otimes j}{\larr}\!\!
& \!\! F_1 \otimes_{\GL_2} C_1(R^2) \!\! &  \!\! \larr \!\!
& \!\! F_1 \otimes_{\GL_2} C_0(R^2).
\end{array}
\]
The map $\ppp(R) \arr H_1(\GL_2, H_1(X))$, constructed in the previous
section, sends $[a]$ to the element represented by
$z_a= \bigg[(g_2, g_1)-(g_3, 1)\bigg] \otimes \partial_2(\infty, 0, 1)$.
Then $(1\otimes j)(z_a)$ is equal to
\[
 \bigg[(g_2 g_1,g_1^2)-(g_3 g_1,g_1)-(g_2^2, g_1 g_2)
+(g_3 g_2, g_2)+(g_2, g_1)-(g_3, 1)\bigg] \otimes (\infty, 0).
\]
Since $a^{-1}g_2 g_1=g_1^2 g_3^{-1}$,
$(a^{-1} g_2^2 g_3^2, g_1 g_2)=  (g_3g_2,g_3g_1){\mtx {a^{-1}(1-a)} 0 0 a }$,
it is easy to see that $ \delta_2 (u_a)\otimes (\infty, 0)=(1\otimes j)(z_a)$,
where
\begin{gather*}
u_a=+(g_3 g_1, g_2, g_1)-(g_3 g_2, g_3 g_1, g_2)
-(a^{-1} g_2^2 g_3^2, g_2^2, g_1 g_2)\\
\hspace{-0.8 cm}
+(a^{-3} g_2^2 g_3^2, a^{-2}g_2^2, 1)
-(a^{-3} g_2^2 g_3^2, a^{-1}g_3^2, 1)\\
\hspace{-1.4 cm}
+ (a^{-1}g_2 g_1, a^{-1}g_1^2, 1)
-(g_1^2g_3^{-1}, ag_3^{-1}, 1) \\
\hspace{-5.3 cm}
+ (g_3, a^{-1}g_3^2, 1).
\end{gather*}
(Hear by $ag$, $a \in \rr$ and $g \in \GL_2$, we mean $\mtx a {0} {0} a g$.)
We have
\[
u_a \otimes \partial_1(\infty, 0) =(u_aw-u_a)\otimes (\infty) \in
F_2 \otimes_{\GL_2} C_0(R^2),
\]
where $w=\mtx 0 1 1 0$. The element of
$E_{0,2}^2 = H_2(\rr \times \rr)/\im(d_{1,2}^1)$
represented by $(u_a w-u_a)\otimes (\infty)$ is $d_{2,1}^2([a])$.
We recall  the isomorphism
\begin{equation}\label{h2b}
H_2(\GL_2, C_0(R^2))\overset{\simeq}{\larr} H_2(B),
\end{equation}
where
$B:=\stabe_{\GL_2}(\infty)=\mtx \rr R 0 \rr$. This
is described  on the chain  level by
\[
F_\ast \otimes_{\GL_2} C_0(R^2) \arr F_\ast \otimes_{B} \z, \ \
y \otimes (\infty) \mt y \otimes 1.
\]
Let $F_\ast(B)\arr \z$ be the standard resolution of $\z$ over $B$.
An augmented preserving chain map of $B$-resolutions
$\phi_\ast: F_\ast \arr F_\ast(B)$
is obtained as follows:\\
Let $s:\GL_2/B \arr \GL_2$ be any (set-theoretic) section of the
canonical projection $\pi: \GL_2 \arr \GL_2/B$. For $g \in \GL_2$, set
$\overline{g}=(s\circ\pi(g))^{-1}g$. Then
\[
\phi_n(g_0, \dots, g_n)=(\overline{g_0}, \dots, \overline{g_n}).
\]
By choosing the section
\[
s(gB)=\begin{cases}
1 & \text{if $g(\infty)=\infty$}\\
w & \text{if $g(\infty)=0$ }\\
{\mtx 1 0 {b} 1} & \text{if $g(\infty)=b^{-1}$,}
\end{cases}
\]
we have
\[
\overline{g}=\begin{cases}
g & \text{if $g(\infty)=\infty$}\\
wg & \text{if $g(\infty)=0$ }\\
{\mtx 1 0 {-b} 1}g & \text{if $g(\infty)=b^{-1}$.}
\end{cases}
\]
Thus on the chain level we have the following map
\[
F_n \otimes_{\GL_2} C_0(R^2) \arr F_n(B) \otimes_{B} \z, \ \
(g_0, \dots, g_n) \otimes (\infty)  \mt
(\overline{g_0}, \dots, \overline{g_n}) \otimes 1,
\]
which induces the isomorphism (\ref{h2b}) on homology.
The diagonal inclusion $\rr \times \rr \arr B$
splits by the map $p: B \arr \rr \times \rr$, $\mtx a c 0 b \mt (a, b)$.
This induces an isomorphism
$H_2(B) \overset{\simeq}{\arr} H_2(\rr \times \rr)$,
which is inverse to the isomorphism $H_2(\rr \times \rr)\overset{\simeq}{\arr}
H_2(B)$, obtained by a theorem of Suslin  \cite[Theorem 1.9]{suslin1985}.

Let $B_\ast(G) \arr \z$ and $F_\ast(G) \arr \z$ be the bar
and the standard resolutions of $\z$ over a group $G$,
respectively. Then the map
\[
F_n(G) \arr B_n(G), \ \
(g_0, \dots, g_n) \mt
[g_0g_1^{-1}|g_1g_2^{-1}|\dots|g_{n-2}g_{n-1}^{-1}|g_{n-1}g_{n}^{-1}]
\]
induces the identity map of $H_\ast(G)$.
Now if we follow the isomorphisms
\begin{gather*}
H_2(\GL_2, C_0(F^2)) \overset{\simeq}{\larr}
H_2(F_\ast(B) \otimes_B \z) \overset{\simeq}{\larr}
H_2(F_\ast(\rr \times \rr) \otimes_{(\rr \times \rr)} \z)\\
\overset{=}{\larr} H_2(B_\ast(\rr \times \rr)
\otimes_{(\rr \times \rr)} \z)=H_2(\rr \times \rr),
\end{gather*}
then by a direct computation one sees that $(u_aw-u_a)\otimes (\infty)$ maps to
\begin{gather*}
\begin{array}{rl}
X_a=
& \!\!\! -[(-a, a^{-1})|(-1, a)]+ [(a^{-1}, a)|(a, 1)]+[(-a^{-1}, a^2)|(-a, a^{-1})]\\
& \!\!\! -[(a, 1)|(a^{-1}, a)]+[(a^{-1},a)|(-1, a)]-[(a, a^{-1})|(1, a)] \\
& \!\!\! -[(a^{-1}, a)|(a^{-2}(1-a)^2, 1)]+[(a, a^{-1})|(a^{-1}, -a^{-1}(1-a)^2)]\\
& \!\!\! +[(a^{-2}(1-a)^2, 1)|(a^{-1}, a)]-[(a^{-1},-a^{-1}(1-a)^2)|(a, a^{-1})]\\
& \!\!\! -[(a, 1)|(a^{-1}(a-1), a^{-1}(a-1))]+[(1, a)|(a^{-1}, -a^{-1}(a-1)^2)]\\
& \!\!\! +[(a^{-1}(a-1), a^{-1}(a-1))|(a, 1)]-[(a^{-1}, -a^{-1}(a-1)^2)|(1, a)]\\
& \!\!\! -[(a, 1)|(a^{-1}, a)]+[(1, a)|(a, a^{-1})].
\end{array}
\end{gather*}
Using the fact that
\begin{gather*}
\begin{array}{rl}
\!
\delta_3([(a^{-1},a)|({-1},a)|(-a,a^{-1})])\! =
& \!\!\!\!\!  [(-1, a)|(-a, a^{-1})]+[(a^{-1}, a)|(a, 1)]-\\
& \!\!\!\!\!  [(-a^{-1}, a^2)|(-a, a^{-1})]-[(a^{-1}, a)|(-1, a)],
\end{array}
\end{gather*}
we see that
\begin{gather*}
\begin{array}{rl}
X_a=
& \!\!\! +{\rm \bf{c}}((-1, a),(-a, a^{-1})) + 2 {\rm \bf{c}}((a^{-1}, a),(a, 1))
+{\rm \bf{c}}((1, a),(a, a^{-1})) \\
&  \!\!\! +{\rm \bf{c}}((a^{-2}(1-a)^2, 1),(a^{-1}, a))
+{\rm \bf{c}}((a, a^{-1}),(a^{-1}, -a^{-1}(1-a)^2)) \\
&  \!\!\! +{\rm \bf{c}}((a^{-1}(a-1), a^{-1}(a-1)),(a, 1))
+{\rm \bf{c}}((1, a),(a^{-1}, -a^{-1}(a-1)^2)).
\end{array}
\end{gather*}
By an easy computation we see that under the map
\[
H_2(\rr \times \rr) \arr E_{0,2}^2\simeq \bigwedge{\!\!{}^2}(\rr \times \rr)/K ,
\]
$X_a$ maps to $(a,1)\wedge(1-a,1)-(a,1)\wedge(1,1-a)$ (see Lemma \ref{sublemma}).
Now the claim follows from the isomorphism (\ref{E-02}).
\end{proof}

As a corollary, we give a homological proof of a theorem of
W. van der Kallen \cite{vdkallen1977}.

\begin{cor}\label{quillen-k2}
Let $R$ be a commutative ring with many units. Then
\[
K_2(R)\simeq K_2^M(R).
\]
\end{cor}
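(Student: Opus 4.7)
The plan is to combine the preceding lemma with the spectral sequence analysis to pin down $H_2(\GL_2(R))$ explicitly, extract $K_2^M(R)$ as a natural quotient, and then identify this quotient with $K_2(R)$ via stability.

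First I would read off $H_2(\GL_2(R))$ from the $E^2$-page. Lemma~\ref{e-1q} gives $E^2_{1,1}=0$, and the calculation preceding Lemma~\ref{e-21} gives $E^2_{2,0}=0$, so the only surviving term in total degree two is $E^\infty_{0,2}=E^2_{0,2}/\im(d^2_{2,1})$. Using the isomorphism $E^2_{0,2}\simeq H_2(\rr)\oplus(\rr\otimes\rr)_\sigma$ from~(\ref{E-02}) and the formula $d^2_{2,1}([a])=(a\wedge(1-a),-a\otimes(1-a))$ of the preceding lemma, this yields
\[
H_2(\GL_2(R))\simeq\frac{H_2(\rr)\oplus(\rr\otimes\rr)_\sigma}{\lan(a\wedge(1-a),\,-a\otimes(1-a)):a,1-a\in\rr\ran}.
\]

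Next I would quotient by the image of $H_2(\rr)\arr H_2(\GL_2(R))$ induced by the diagonal inclusion $a\mt\diag(a,1)$. In the presentation above, this projection annihilates the first summand and forces each relation to collapse to $a\otimes(1-a)=0$, so by Lemma~\ref{milnor-k2}
\[
H_2(\GL_2(R))/\im H_2(\rr)\simeq(\rr\otimes\rr)_\sigma/\lan a\otimes(1-a)\ran\simeq K_2^M(R).
\]

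To connect this quotient to $K_2(R)$, I would invoke Suslin's stability theorem for the homology of $\GL_n$ over rings with many units (in particular, surjectivity of $H_2(\GL_2(R))\two H_2(\GL_\infty(R))$), together with the fact that the determinant $\GL_\infty(R)\two\rr$ splits and $\SK_1(R)=0$ for such rings. The Lyndon--Hochschild--Serre spectral sequence for the split extension $1\arr\SL_\infty(R)\arr\GL_\infty(R)\arr\rr\arr1$ then degenerates in total degree two to give $H_2(\GL_\infty(R))/\im H_2(\rr)\simeq H_2(\SL_\infty(R))=K_2(R)$. Combining with Step~2 produces a surjection $K_2^M(R)\two K_2(R)$, and a direct check on Steinberg symbols shows this is the inverse of the canonical Milnor-to-Quillen map; both maps are therefore isomorphisms.

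The main obstacle is this final compatibility check: one has to match the Steinberg symbol $\{a,b\}\in K_2(R)$ (realized as a commutator of elementary matrices) with the class of $(a,1)\wedge(1,b)\in H_2(\rr\times\rr)$ in $H_2(\GL_2(R))/\im H_2(\rr)$, and verify that the composition with the isomorphism of Step~2 is the identity on $K_2^M(R)$.
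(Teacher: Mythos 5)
Your proposal is correct and follows essentially the same route as the paper: compute $H_2(\GL_2)$ from the $E^2$-page using Lemma~\ref{e-1q}, the vanishing of $E^2_{2,0}$, and the formula for $d^2_{2,1}$; identify the complement of $H_2(\rr)$ with $K_2^M(R)$ via Lemma~\ref{milnor-k2}; and compare with $K_2(R)$ through the Lyndon--Hochschild--Serre spectral sequence of $\det$ (using $\SK_1(R)=0$) together with homology stability. The only cosmetic difference is that the paper exhibits explicit splitting maps to obtain direct-sum decompositions $H_2(\GL_2)\simeq H_2(\rr)\oplus K_2^M(R)$ and $H_2(\GL)\simeq H_2(\rr)\oplus K_2(R)$ rather than passing to quotients by $\im H_2(\rr)$, and it does not carry out your final Steinberg-symbol compatibility check, which is not needed for the bare isomorphism asserted in the statement.
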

\begin{proof}
By an easy analysis of the above spectral sequence,
and  Lemma \ref{milnor-k2}, one sees that
\[
H_2(\GL_2) \simeq E_{0,2}^\infty \simeq  H_2(\rr) \oplus (\rr \otimes \rr)_\sigma/
\lan (a\wedge (1-a), -a \otimes (1-a))| a \in \rr \ran.
\]
From the maps
\begin{gather*}
\begin{array}{ll}
H_2(\rr) \arr E_{0,2}^\infty, & x \mt (x, 0), \\
E_{0,2}^\infty \arr H_2(\rr), & (x, c\otimes d) \mt x+c\wedge d,\\
K_2^M(R) \arr E_{0,2}^\infty, & \{a,b\} \mt (a \wedge b,- a \otimes b), \\
E_{0,2}^\infty \arr K_2^M(R), & (x, c\otimes d) \mt -\{c,d\},
\end{array}
\end{gather*}
we obtain the decomposition
\[
H_2(\GL_2) \simeq  H_2(\rr) \oplus K_2^M(R).
\]

From the
corresponding Lyndon-Hochschild-Serre spectral sequence of the extension
$1 \arr \SL \arr  \GL \overset{\det}{\arr} \rr \arr 1$, using
the fact that $\SK_1(R):=\SL/E(R)=0$ \cite[Proposition 2]{mirzaii2008},
we obtain the decomposition
\[
H_2(\GL) \simeq H_2(\rr) \oplus K_2(R).
\]
Here $E(R)$ is the elementary
subgroup of $\GL$. To obtain the above
decomposition we use the fact that
$E(R)$ is a perfect group, and  $K_2(R)=H_2(E(R))$.
Now the claim follows from the homology
stability theorem $H_2(\GL_2)\simeq H_2(\GL)$
\cite[Theorem 1]{guin1989}.
\end{proof}

Now we are ready to prove Proposition \ref{homological-Bloch}.

~
\\
{\it Proof of Proposition}  \ref{homological-Bloch}.
The spectral sequence studied in the above,
implies the exact sequence
\[
E_{0,3}^2 \arr H_3(\GL_2) \arr B(R) \arr 0.
\]
One can show without difficulty that  the  map $H_3(\GL_2) \arr B(R)$
coincides with the one obtained in Section \ref{homological-bloch}.
The rest follows from the fact that
$E_{0,3}^2=H_3(\rr \times\rr)/\im(d_{1,3}^1)$.
$\hspace{8.5 cm} \Box$

\section{Bloch-wigner exact sequence}

We are now ready to prove our main theorem, which can be considered as a
generalization of the known Bloch-Wigner theorem.

\begin{thm}\label{bloch-wigner1}
Let $R$ be  a commutative ring with many units.
We have the exact sequence
\[
\tors(\mu_R, \mu_R) \arr \H_3(\SL_2(R), \z)\arr B(R) \arr 0,
\]
where
$\H_3(\SL_2(R), \z):=H_3(\GL_2)/\im\Big(H_3(\rr) \oplus \rr \otimes H_2(\rr)\Big)$.

When $R$ is an integral domain, then the left hand side map in the above
exact sequence is injective.
\end{thm}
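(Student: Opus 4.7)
First I would use Proposition~\ref{homological-Bloch} to identify $B(R)\simeq H_3(\GL_2)/\im(H_3(\rr\times\rr)\to H_3(\GL_2))$; the definition of $\H_3(\SL_2(R),\z)$ then produces a tautological surjection $\H_3(\SL_2(R),\z)\twoheadrightarrow B(R)$ whose kernel $K$ is the image of $H_3(\rr\times\rr)$ in $H_3(\GL_2)$ modulo the image of $H_3(\rr)+\rr\otimes H_2(\rr)$. The proof then has two parts: (a) show $K$ is a quotient of $\tors(\mu_R,\mu_R)$, yielding the exact sequence; and (b) when $R$ is a domain, show that the resulting map $\tors(\mu_R,\mu_R)\twoheadrightarrow K$ is an isomorphism.

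For (a), I invoke the K\"unneth theorem
\[
0 \to \bigoplus_{p+q=3} H_p(\rr)\otimes H_q(\rr) \to H_3(\rr\times\rr) \to \tors(\rr,\rr) \to 0,
\]
in which the tensor summand equals $H_3(\rr)^{\oplus 2}\oplus(\rr\otimes H_2(\rr))^{\oplus 2}$ and $\tors(\rr,\rr)=\tors(\mu_R,\mu_R)$. Under the diagonal embedding $\rr\times\rr\hookrightarrow\GL_2$, the two K\"unneth copies of $H_3(\rr)$ have the same image in $H_3(\GL_2)$: conjugation by $w=\mtx{0}{1}{1}{0}$ interchanges the two factors of the diagonal torus and acts trivially on group homology, so applying this to a class in the first copy produces the corresponding class in the second. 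The same applies to the two copies of $\rr\otimes H_2(\rr)$. Hence the image of the entire tensor summand in $H_3(\GL_2)$ coincides with $\im(H_3(\rr)+\rr\otimes H_2(\rr))$, so $K$ is a quotient of $\tors(\mu_R,\mu_R)$ and (a) follows.

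For (b), $\mu_R$ is cyclic since $R$ is a domain. I trace the Tor summand through the spectral sequence developed above via the composite
\[
\tors(\mu_R,\mu_R)\hookrightarrow E^1_{0,3}\twoheadrightarrow E^2_{0,3}\twoheadrightarrow E^\infty_{0,3}\hookrightarrow H_3(\GL_2)\twoheadrightarrow \H_3(\SL_2(R),\z).
\]
The first arrow is the K\"unneth Tor-inclusion. The second is taking swap-coinvariants on $H_3(\rr\times\rr)$; tracking signs through the Eilenberg--Zilber chain map, one verifies that the swap fixes the Tor summand, so this remains injective on $\tors(\mu_R,\mu_R)$. The third arrow is threatened only by $d^2\colon E^2_{2,2}\to E^2_{0,3}$, since $E^2_{1,\ast}=0$ by Lemma~\ref{e-1q} and $E^2_{p,\ast}=0$ for $p\ge 3$; a chain-level computation analogous to that of $d^2_{2,1}$ should show the image of $d^2_{2,2}$ lies in the tensor part of $E^2_{0,3}$. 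Finally, passing to $\H_3(\SL_2(R),\z)$ kills only the tensor contribution, leaving the Tor summand intact.

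The main obstacle is the explicit computation of $d^2\colon E^2_{2,2}\to E^2_{0,3}$: it requires chain-level representatives and careful bookkeeping, parallel to the (already intricate) calculation for $d^2_{2,1}$ performed above, in order to confirm that no component of this differential lies in the Tor summand of $E^2_{0,3}$. Granted this, the remaining steps are essentially formal.
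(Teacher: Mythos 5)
Your part (a) is sound and is in substance the paper's own argument: the paper also starts from the exact sequence $E^2_{0,3}\arr H_3(\GL_2)\arr B(R)\arr 0$, splits $H_3(\rr\times\rr)$ by the K\"unneth theorem into the tensor part $M=H_3(\rr)^{\oplus 2}\oplus\rr\otimes H_2(\rr)\oplus H_2(\rr)\otimes\rr$ and the quotient $\tors(\mu_R,\mu_R)$, and uses the swap (there in the guise of $d^1_{1,3}=H_3(\alpha)-H_3(\id)$ together with the snake lemma) to see that the two K\"unneth copies become identified, leaving a quotient of $\tors(\mu_R,\mu_R)$ as the kernel of $\H_3(\SL_2(R),\z)\arr B(R)$. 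Your ``conjugation by $w$'' phrasing is just a repackaging of the computation of $d^1_{1,3}|_M$. (A minor slip: for a domain $\mu_R$ is locally cyclic, not cyclic in general, e.g.\ $R=\C$.)

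For the injectivity over a domain, however, your route has a genuine gap. It rests on two unproved claims: (i) that the swap acts as the identity on the Tor summand of $H_3(\rr\times\rr)$, so that $\im(d^1_{1,3})$ lies in the tensor part; and (ii) that $\im\bigl(d^2_{2,2}\colon E^2_{2,2}\arr E^2_{0,3}\bigr)$ has no component in the Tor summand. Claim (i) is a delicate sign question: the interchange acts on the K\"unneth Tor term with a sign that must actually be computed, and this is exactly the source of the nontrivial extension $\tors(\mu_F,\mu_F)^\sim$ by $\z/2$ in Suslin's theorem for fields, so it cannot simply be asserted; note the paper deliberately carries the unidentified quotient $\tors(\mu_R,\mu_R)/\im(\nu)$ rather than claiming $\nu=0$. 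Claim (ii) you explicitly leave as the ``main obstacle,'' and it is not a bookkeeping exercise analogous to the computation of $d^2_{2,1}$: together with (i) it is essentially equivalent to the injectivity statement itself. The paper proves injectivity by a completely different and much shorter route: since $\tors(\mu_R,\mu_R)\arr\tors(\mu_{\overline F},\mu_{\overline F})$ is injective for $\overline F$ the algebraic closure of the fraction field of $R$, functoriality reduces everything to the case of an algebraically closed field, where $\H_3(\SL_2(\overline F))\simeq H_3(\SL_2(\overline F))\simeq K_3(\overline F)^{\ind}$ (by the argument of Corollary \ref{bloch-wigner2} and \cite[Proposition 6.4]{mirzaii-2008}), and the torsion of $K_3(\overline F)^{\ind}$ is known from Suslin's rigidity theorems \cite{suslin1983}, \cite{suslin1984}. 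That the paper must import these deep results, and that the domain hypothesis enters only through the embedding into $\overline F$, strongly suggests your remaining step cannot be closed by a chain-level calculation; as it stands, the injectivity part of your proposal is incomplete.
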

\begin{proof}
Consider the exact sequence
\[
E_{0,3}^2 \arr H_3(\GL_2) \arr B(R) \arr 0
\]
(see the proof of Proposition \ref{homological-Bloch}
in the previous section). To describe $E_{0,3}^2$, let
$M=H_3(\rr) \oplus H_3(\rr) \oplus \rr \otimes H_2(\rr) \oplus
H_2(\rr) \otimes \rr.$
We have the following commutative diagram with exact rows
\[
\begin{array}{ccccccccc}
0  & \larr & M & {\larr}&   H_3(\rr \times \rr) &   \larr
& \tors(\mu_R, \mu_R) & \larr &  0 \\
&  &\Bigg\downarrow\vcenter{%
\rlap{$\scriptstyle{d_{1,3}^1|_M}$}}
&       & \Bigg\downarrow\vcenter{%
\rlap{$\scriptstyle{d_{1,3}^1}$}}
&  &\Bigg\downarrow{%
\rlap{$\scriptstyle{\nu}$}}&      &  \\
0  & \larr & M & \larr &  H_3(\rr \times \rr)
&   \larr   &    \tors(\mu_R, \mu_R) & \larr &  0.
\end{array}
\]
Here the exact rows follow from the K\"unneth theorem and
$\nu$ is induced in a natural way. We also have
\begin{gather*}
\hspace{-5.3 cm}
d_{1,3}^1|_M:
(r, s, a \otimes r', s'\otimes b ) \mt \\
\ \ \ (-r+s,r-s, -a \otimes r'+ b \otimes s',
r'\otimes a - s' \otimes b).
\end{gather*}
So the Snake lemma implies the exact sequence
\begin{equation}\label{exact-tor}
N \arr E_{0,3}^2 \arr
\tors(\mu_R, \mu_R)/\im(\nu) \arr 0,
\end{equation}
where $N \simeq H_3(\rr) \oplus \rr \otimes H_2(\rr)$. From the
commutative diagram
\[
\begin{array}{ccccccc}
N & \overset{=}{\larr} &   N &  &  &  &   \\
\Bigg\downarrow\vcenter{%
\rlap{$\scriptstyle{}$}}
&       & \Bigg\downarrow\vcenter{%
\rlap{$\scriptstyle{}$}}&  & &  &  \\
E_{0,3}^2 \!\! &\!\! \larr \!\! & \!\! H_3(\GL_2)\!\!
& \!\! \larr \!\!  &  \!\!B(R)\!\! & \!\! \larr \!\! & \!\! 0,
\end{array}
\]
we obtain the exact sequence
\[
E_{0,3}^2/N \arr H_3(\GL_2)/N \arr B(R) \arr 0.
\]
Now the first claim follows from the exact sequence (\ref{exact-tor}).
To complete the proof, for an integral domain $R$
we must prove the injectivity of
\[
\tors(\mu_R, \mu_R) \arr \H_3(\SL_2).
\]
Denote by $\overline{F}$ the algebraic closure of the quotient field $F$ of $R$.
Since the homomorphism
\[
\tors(\mu_R, \mu_R)\arr \tors(\mu_{\overline{F}}, \mu_{\overline{F}})
\]
is injective, we may replace $R$ with the field $\overline{F}$.
We know that for an algebraically closed field $F$,
$H_3(\SL_2(F))\simeq \H_3(\SL_2(F))$ (see the proof of Corollary
\ref{bloch-wigner2} below) and $K_3(F)^\ind \simeq H_3(\SL_2(F))$
\cite[Proposition 6.4]{mirzaii-2008}. But torsion elements of
$K_3(F)^\ind$ are known by a result of Suslin \cite{suslin1983}, \cite{suslin1984}.
\end{proof}

\begin{rem}
(i) The main results of this paper,
Proposition \ref{homological-Bloch} and Theorem \ref{bloch-wigner1},
remain true over non-commutative rings with many units.

Let $R$ be a non-commutative ring with many units
(see \cite[\S 2]{mirzaii-2008}, \cite[\S 1]{guin1989}). Let $\ppp(R)$ be
the quotient of the free abelian group $Q(R)$, defined as in Section \ref{section1},
to the subgroup generated by
\begin{gather*}
[aba^{-1}]-[b], \\
[a]-[b]+[a^{-1}b]-[{(1- b^{-1})}^{-1}(1- a^{-1})]
+[{(1- b)}^{-1}(1- a)],
\end{gather*}
where $a,1-a, b, 1-b, a-b \in \rr$. We define $B(R)$  as the
kernel of the map
\[
\lambda: \ppp(R) \arr (K_1(R) \otimes K_1(R))_\sigma, \ \ \
[a] \arr a \otimes (1-a).
\]
Since $K_2^M(R)\simeq (K_1(R) \otimes K_1(R))_\sigma/\lan a \otimes (1-a)
| a, 1-a \in K_1(R) \ran$,
we have the exact sequence
\[
0 \arr B(R) \arr \ppp(R) \overset{\lambda}{\arr}
(K_1(R) \otimes K_1(R))_\sigma \arr K_2^M(R) \arr 0.
\]
Note that by the homology stability theorem
\cite[Theorem 1]{guin1989}, $K_1(R)\simeq \rr/[\rr, \rr]$.
Moreover Lemmas \ref{e-1q} and \ref{e-21} are valid,
$E_{p, 0}^2=0$, $p \ge 0$, and
\[
H_1(\GL_2, H_1(X)) \simeq \ppp(R) \oplus K_1(R).
\]
We also have $E_{0,2}^2\simeq H_2(\rr) \oplus
(K_1(R) \otimes K_1(R))_\sigma$ and
\[
d_{2,1}^2:\ppp(R) \arr
H_2(\rr) \oplus (K_1(R) \otimes K_1(R))_\sigma
\]
is given by $[a] \mt \bigg( {\rm \bf{c}}(a,1-a), - a \otimes (1-a) \bigg)$.
Thus we have the isomorphism
$B(R)\simeq H_3(\GL_2)/H_3(\rr \times \rr)$ and  the Bloch-Wigner
exact sequence
\begin{gather*}
\tors({K_1(R)}, {K_1(R)}) \arr \H_3(\SL_2(R), \z)
\arr B(R) \arr 0.
\end{gather*}
%
\par (ii) For a non-commutative ring $R$,
$K_2^M(R)$ and $K_2(R)$ are not isomorph in general.
For more information in this direction see \cite[Theorem 4.2.1]{guin1989}.
\end{rem}

Let $\rr \otimes H_2(\rr) \arr H_3(\GL_2)$, $a \otimes (b \wedge c) \mt
a \cup (b \wedge c)$ be the shuffle product. Note that
\begin{gather*}
a \cup (b \wedge c)=
{\bf c}(\diag(a, 1),\diag(1,b), \diag(1, c)).
\end{gather*}
Let $\Phi: \rr \otimes K_2^M(R) \arr H_3(\GL_2)$ be defined by
\begin{gather*}
a \otimes \{b , c\} \mt
{\bf c}(\diag(a, a),\diag(b,1), \diag(c, c^{-1})).
\end{gather*}

\begin{lem}\label{sublemma1}
Let $\inc: \rr \arr \GL_2$ be the natural inclusion. Then in
$H_3(\GL_2)$ we have
\begin{gather*}
\Phi\Big(a \otimes \{b, c\})=H_3(\inc)\Big( {\bf c}(a, b, c) \Big)
-c\cup (a \wedge b) +a\cup (b \wedge c)+ b\cup (a \wedge c), \\
\hspace{-1.6 cm}
2\ a\cup (b \wedge c)=
2\ H_3(\inc)\Big( {\bf c}(a, b, c) \Big)
+ \Phi\Big(b \otimes \{a, c\}- c \otimes \{a, b\}\Big).
\end{gather*}
\end{lem}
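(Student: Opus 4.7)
The plan is to expand the cycle defining $\Phi(a \otimes \{b, c\})$ by writing each diagonal matrix as a product inside the torus $\rr \times \rr \se \GL_2$, and then identify each resulting summand via the shuffle-product formula in Lemma~\ref{sublemma}(iii). Concretely, inside the torus we factor
\[
\diag(a, a) = (a, 1) \cdot (1, a), \qquad \diag(c, c^{-1}) = (c, 1) \cdot (1, c^{-1}).
\]
Since all six entries are diagonal and hence pairwise commute, applying Lemma~\ref{sublemma}(i) twice yields
\[
\Phi(a \otimes \{b, c\}) = T_1 + T_2 + T_3 + T_4,
\]
where $T_k = {\rm \bf c}(x_k, (b, 1), y_k)$ with $x_k \in \{(a, 1), (1, a)\}$ and $y_k \in \{(c, 1), (1, c^{-1})\}$.

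The key identification uses two standard facts: (a)~the Weyl swap $\alpha: (x, y) \mt (y, x)$ of $\rr \times \rr$ acts as the identity after mapping into $H_*(\GL_2)$, since it is induced by conjugation by $w = \mtx 0 1 1 0$; and (b)~inside $H_1(\rr) \simeq \rr$ one has ${\rm \bf c}(c^{-1}) = -{\rm \bf c}(c)$ by part~(i). With these,
\[
T_1 = {\rm \bf c}((a, 1), (b, 1), (c, 1)) = H_3(\inc)({\rm \bf c}(a, b, c)),
\]
and applying (iii) together with $\alpha$ and a cyclic reordering (of sign $+1$ by (ii)) identifies $T_2 = c^{-1} \cup (a \wedge b) = - c \cup (a \wedge b)$ and $T_3 = a \cup (b \wedge c)$. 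For $T_4$ a single transposition inside ${\rm \bf c}$ contributes a minus sign, giving $T_4 = -b \cup (a \wedge c^{-1}) = b \cup (a \wedge c)$. Summing the four terms produces the first identity.

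For the second identity, I would apply the first formula to $\Phi(b \otimes \{a, c\})$ and $\Phi(c \otimes \{a, b\})$, using the alternating property of ${\rm \bf c}$ (part~(ii)) and of $\wedge$ to reorder entries into the canonical order on $a, b, c$. A short calculation shows that in the difference $\Phi(b \otimes \{a, c\} - c \otimes \{a, b\})$ the $c \cup (a \wedge b)$ and $b \cup (a \wedge c)$ terms cancel in pairs, the $a \cup (b \wedge c)$ term appears twice, and the $H_3(\inc)$ contribution totals $-2 H_3(\inc)({\rm \bf c}(a, b, c))$; rearranging yields the stated formula.

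The main obstacle is pure sign-bookkeeping: tracking the signs coming from each transposition inside ${\rm \bf c}$, from each application of $\alpha$ followed by a cyclic reshuffling, and from the additive identification $c^{-1} \leftrightarrow -c$ in $H_1(\rr)$. Conceptually everything reduces to the four-term expansion above, and once that is in hand the verification of both identities is entirely formal.
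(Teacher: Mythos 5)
Your proposal is correct and is exactly the ``direct computation'' the paper asserts: expand $\diag(a,a)$ and $\diag(c,c^{-1})$ multiplicatively inside the torus via Lemma \ref{sublemma}(i), identify the four resulting terms with $H_3(\inc)({\bf c}(a,b,c))$ and the cup products using the triviality of conjugation by $w=\mtx 0110$ on $H_*(\GL_2)$ together with parts (ii) and (iii), and then derive the second identity formally from the first. I verified the sign bookkeeping ($T_2=-c\cup(a\wedge b)$, $T_3=a\cup(b\wedge c)$, $T_4=b\cup(a\wedge c)$, and the cancellations in $\Phi(b\otimes\{a,c\}-c\otimes\{a,b\})$) and it all checks out.
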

\begin{proof}
These are direct computations.
\end{proof}

The following corollary justifies why we call Theorem \ref{bloch-wigner1}
a generalization of the Bloch-Wigner theorem.

\begin{cor}\label{bloch-wigner2}
Let $R$ be a commutative ring with many units.
\par {\rm (i)} Then
\[
\tors(\mu_R, \mu_R)_{\z[1/2]} \arr H_3(\SL_2, \z[1/2])_\rr
\arr B(R)_{\z[1/2]} \arr 0
\]
is exact.
\par {\rm (ii)} If $\rr=\rr^2$, then
\[
\tors(\mu_R, \mu_R) \arr H_3(\SL_2)
\arr B(R) \arr 0
\]
is exact.

Moreover when $R$ is a domain, then the left hand side maps in the above
exact sequences are injective.
\end{cor}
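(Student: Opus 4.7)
The plan is to deduce both parts from Theorem \ref{bloch-wigner1} by identifying the group
\[
\H_3(\SL_2(R), \z) = H_3(\GL_2)/\im\!\Big(H_3(\rr) \oplus \rr \otimes H_2(\rr)\Big)
\]
(with appropriate coefficients) with the ordinary homology group appearing in the statement: in case (ii) I want $\H_3(\SL_2(R), \z) \simeq H_3(\SL_2(R))$, and in case (i) I want $\H_3(\SL_2(R), \z[1/2]) \simeq H_3(\SL_2(R), \z[1/2])_\rr$. Granted these identifications, both sequences are exactly those of Theorem \ref{bloch-wigner1}.

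To prove the identifications, I would analyse the Lyndon-Hochschild-Serre spectral sequence
\[
E^2_{p,q} = H_p(\rr, H_q(\SL_2)) \Longrightarrow H_{p+q}(\GL_2)
\]
of the split extension $1 \arr \SL_2 \arr \GL_2 \overset{\det}{\arr} \rr \arr 1$, sectioned by $a \mt \diag(a,1)$. The induced conjugation action of $\rr$ on $\SL_2$ factors through $\rr/\rr^2$: decomposing $\diag(b^2,1)=\diag(b,b)\cdot\diag(b,b^{-1})$ as a product of a central matrix (which acts trivially on $\SL_2$) and an $\SL_2$-matrix (which gives an inner automorphism, trivial on homology) shows that $\rr^2$ acts trivially on $H_q(\SL_2)$. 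Under the assumption $\rr=\rr^2$ of (ii) the action is therefore trivial, so the top edge piece $E^\infty_{0,3}$ should compute $H_3(\SL_2)$; in (i), after inverting $2$, the induced action of the $2$-torsion group $\rr/\rr^2$ is controlled well enough that the top edge piece computes $H_3(\SL_2,\z[1/2])_\rr$ (compatibly with the statement in the introduction).

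The central technical step, which I expect to be the main obstacle, is to show that the preceding filtration piece $F_2 \subset H_3(\GL_2)$ (so that $H_3(\GL_2)/F_2 = E^\infty_{0,3}$) coincides precisely with the image of $H_3(\rr)\oplus \rr\otimes H_2(\rr)$ under the diagonal inclusion $\rr\times\rr\hookrightarrow \GL_2$. Using the section $\rr\arr\GL_2$ together with the shuffle-product formulae of Lemma \ref{sublemma}, one should be able to represent cycles generating $E^\infty_{3,0}$, $E^\infty_{2,1}$ and $E^\infty_{1,2}$ by explicit images of classes in $H_3(\rr\times\rr)$ coming from the K\"unneth summands $H_3(\rr)\oplus \rr\otimes H_2(\rr)$; the only K\"unneth summand left aside is $\tors(\mu_R,\mu_R)$, which is precisely the group appearing on the left of the Bloch-Wigner sequence.

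Finally, for the injectivity of the left-hand map when $R$ is a domain, I would base-change along $R\hookrightarrow \overline{F}$, with $\overline{F}$ the algebraic closure of the fraction field of $R$. The induced map $\tors(\mu_R,\mu_R)\arr \tors(\mu_{\overline{F}},\mu_{\overline{F}})$ is injective, so it suffices to treat an algebraically closed field. In that case $H_3(\SL_2(\overline{F}))\simeq K_3(\overline{F})^{\ind}$ by \cite[Proposition 6.4]{mirzaii-2008}, and Suslin's computation of the torsion subgroup of $K_3^{\ind}$ of an algebraically closed field in \cite{suslin1983,suslin1984} yields the required injectivity.
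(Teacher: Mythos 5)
Your overall strategy is the paper's: reduce to Theorem \ref{bloch-wigner1} by identifying $\H_3(\SL_2(R),\z)$ with $H_3(\SL_2)$ (resp.\ $H_3(\SL_2,\z[1/2])_\rr$) via the Lyndon--Hochschild--Serre spectral sequence of $1\arr\SL_2\arr\GL_2\overset{\det}{\arr}\rr\arr 1$, and your treatment of the injectivity for domains (base change to $\overline{F}$, then $H_3(\SL_2(\overline{F}))\simeq K_3(\overline{F})^{\ind}$ and Suslin's computation of its torsion) is exactly the paper's argument. But the two steps you defer are precisely where the content lies, and as sketched they do not go through. First, triviality of the $\rr$-action on $H_\ast(\SL_2)$ only gives $E^2_{0,3}=H_3(\SL_2)$; it does not make $E^\infty_{0,3}=H_3(\SL_2)$, because the differentials $d^r\colon E^r_{r,4-r}\arr E^r_{0,3}$ (equivalently the map $\beta\colon H_2(\rr,H_2(\SL_2))\arr H_3(\SL_2)_\rr$ in the exact sequence of low-degree terms) could have nonzero image, i.e.\ $H_3(\SL_2)\arr H_3(\GL_2)$ could fail to be injective. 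The paper spends real effort here: in (i) it invokes the isomorphism $H_3(\rr\times\SL_2,\z[1/2])_\rr\simeq H_3(\GL_2,\z[1/2])$ induced by $(a,g)\mt ag$, and in (ii) it analyzes the central extension $1\arr\mu_{2,R}\arr\rr\times\SL_2\arr\GL_2\arr 1$ to bound the kernel by exponent $4$ and then kills $\beta$ using unique $2$-divisibility of $H_2(\rr)$ and $K_2^M(R)$. Your proposal contains no substitute for this. (Also, your filtration is inverted: $E^\infty_{0,3}$ is the bottom piece $F_0\se H_3(\GL_2)$, not a quotient $H_3(\GL_2)/F_2$.)

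Second, the claim that $\im\big(H_3(\rr)\oplus\rr\otimes H_2(\rr)\big)$ is a complement to the image of $H_3(\SL_2)$ is exactly where the hypotheses ``$\z[1/2]$'' and ``$\rr=\rr^2$'' enter, and your sketch never uses them. The issue is that the shuffle class $a\cup(b\wedge c)$ does not project to the natural generator of $H_1(\rr,H_2(\SL_2))\simeq\rr\otimes K_2^M(R)$; by Lemma \ref{sublemma1} one only controls $2\,a\cup(b\wedge c)$ in terms of $H_3(\inc)({\bf c}(a,b,c))$ and the splitting classes $\Phi(-)$, whence the need either to invert $2$ or to take square roots of units. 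The paper then still has to verify that $\Phi$ genuinely splits the sequence, which it does by passing to $\GL_3$ via homology stability and the splitting $B\GL^+\sim B\rr\times B\SL^+$. Without these two ingredients your identification of $\H_3(\SL_2)$ with the stated homology groups, and hence the corollary, is not established.
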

\begin{proof}
{}From the corresponding Lyndon-Hochschild-Serre spectral
sequence of the extension
$1 \arr \SL_2 \arr  \GL_2 \overset{\det}{\arr} \rr \arr 1$,
we obtain the exact sequence
\begin{gather*}
\hspace{-6 cm}
H_2(\rr, H_2(\SL_2)) \overset{\beta}{\arr} H_3(\SL_2)_\rr \arr \\
\hspace{2 cm}
H_3(\GL_2)/H_3(\rr) \arr H_1(\rr, H_2(\SL_2)) \arr 0.
\end{gather*}

(i) The map $\gamma: \rr \times \SL_2 \arr \GL_2$, $(a, g) \mt a g$
induces an isomorphism of homology groups
$H_3(\rr \times \SL_2, \z[1/2])_\rr \overset{\simeq}{\larr}
H_3(\GL_2,\z[1/2])$
(see the proof of Theorem 6.1 in \cite{mirzaii-2008} or proof of
Corollary 1 in \cite{mirzaii2008}). So by the K\"unneth theorem,
$H_3(\SL_2, \z[1/2])_\rr \harr H_3(\GL_2,\z[1/2])$. One can also
see that
\[
H_1(\rr, H_2(\SL_2, \z[1/2])) \simeq H_1(\rr, H_2(\SL, \z[1/2]))
\simeq  (\rr \otimes K_2^M(R))_{\z[1/2]}.
\]
See \cite[Lemma 2]{mirzaii2008} or the proof of Corollary 6.2 in
\cite{mirzaii-2008} for a proof of the first isomorphism. Note that we
use Corollary \ref{quillen-k2} for the second isomorphism.
Thus we obtained the exact sequence
\begin{gather*}
\hspace{-2 cm}
0{\arr} H_3(\SL_2, \z[1/2])_\rr \arr
H_3(\GL_2, \z[1/2])/H_3(\rr, \z[1/2]) \\
\hspace{7 cm}
\arr (\rr \otimes K_2^M(R))_{\z[1/2]} \arr 0.
\end{gather*}
We show that the map
\begin{gather*}
(\rr \otimes K_2^M(R))_{\z[1/2]} \arr
H_3(\GL_2, \z[1/2])/ H_3(\rr, \z[1/2]),\\
a \otimes \{b,c\} \mt \frac{1}{2}
{\bf c}(\diag(a, a),\diag(b,1), \diag(c, c^{-1})),
\end{gather*}
splits the above exact sequence.
In order to verify this, consider the following commutative diagram
with exact rows
\[
\begin{array}{cccccccc}
 0 \arr \!\! &\!\! H_3(\SL_2,\z[1/2])_\rr
 \!\!&\!\! \larr\!\! & \!\! S_2\!\! &  \!\! {\larr} \!\! &
\!\!(\rr \otimes K_2^M(R))_{\z[1/2]} \!\! & \!\! \arr 0 \\
 &\Bigg\downarrow\vcenter{%
\rlap{$\scriptstyle{}$}}
&       & \Bigg\downarrow\vcenter{%
\rlap{$\scriptstyle{}$}}
&  &\Bigg\downarrow{%
\rlap{$\scriptstyle{=}$}} \\
0  \arr \!\! & \!\!H_3(\SL,\z[1/2]) \!\!&\!\!
{\larr}\!\! & \!\! S_3\!\! &  \!\! \larr \!\!
& \!\!(\rr \otimes K_2^M(R))_{\z[1/2]} \!\!  & \!\! \arr 0,
\end{array}
\]
where $S_i:=H_3(\GL_i, \z[1/2])/ H_3(\rr, \z[1/2])$. The second
exact sequence can be obtained similar to the first one.
Consider the homotopy equivalence
\[
B\GL^+ \sim B\rr \times B\SL^+
\]
(see \cite[Lemma 5.3]{suslin1991} or \cite[Remark 2.1]{arlettaz1992}).
The homology stability theorem and the K\"unneth theorem imply
\[
H_3(\GL_3)\simeq H_3(\GL)\simeq H_3(\SL) \oplus
\rr \otimes K_2^M(R) \oplus H_3(\rr).
\]
It is easy to see that the embedding $\rr \otimes K_2^M(R) \arr H_3(\GL)$
is induced by
\[
a \otimes \{b,c\} \mt
{\bf c}(\diag(a, 1, 1), \diag(1, b, 1),\diag(1, c, c^{-1})),
\]
and that it splits the second exact sequence in the above diagram.
The image of ${\bf c}(\diag(a, a),\diag(b,1), \diag(c, c^{-1}))$
in $H_3(\GL_3)$ is equal to
\begin{gather*}
{\bf c}(\diag(a, 1, a^{-1}), \diag(b, b^{-1} , 1),\diag(c, 1, c^{-1}))\\
+ {\bf c}(\diag(a^2, 1, 1), \diag(1, b, 1),\diag(1, c, c^{-1})).
\end{gather*}
The rest is easy to verify.
Now using Lemma \ref{sublemma1}, it is easy to see that
\[
H_3(\SL_2, \z[1/2])_\rr \simeq H_3(\GL_2, \z[1/2])/\im(L_{\z[1/2]}),
\]
where $L:=H_3(\rr) \oplus \rr \otimes H_2(\rr)$.

(ii) The map $\gamma$ induces the short exact sequence
\[
1 \arr \mu_{2,R} \arr \rr \times \SL_2 \arr \GL_2 \arr 1,
\]
and we consider its corresponding spectral sequence,
\[
\E_{p,q}^2=H_p(\GL_2, H_q(\mu_2)) \Rightarrow H_{p+q}(\rr \times \SL_2).
\]
Using the fact that $H_2(\rr)$
and $K_2^M(R)$ are uniquely
2-divisible (see the proof of Proposition 1.2 in \cite{bass-tate1973}),
we have the following $\E^2$- terms:
\begin{gather*}
\begin{array}{lcllll}
\ast       &     &                    &            &             &  \\
H_3(\mu_2) & 0   & \ast               &            &             &  \\
H_2(\mu_2) & 0   & \ast               & \ast       &             &  \\
\mu_2      & 0   & \tors(\mu_R,\mu_{2,R}) & \ast       &  \ast       &  \\
\z         & \rr & H_2(\GL_2)         & H_3(\GL_2) & H_4(\GL_2). &
\end{array}
\end{gather*}
By an easy analysis of this spectral sequence, one sees that,
the kernel of $H_3(\SL_2) \arr H_3(\GL_2)$ is torsion of
exponent $4$. On the other hand
\[
\begin{array}{ll}
H_2(\rr, H_2(\SL_2)) \!\!\!\!&
\simeq H_2(\rr) \otimes H_2(\SL_2) \oplus \tors(H_1(\rr), H_2(\SL_2)) \\
& \simeq H_2(\rr) \otimes K_2^M(R) \oplus \tors(\mu_R, K_2^M(R)).
\end{array}
\]
Since $H_2(\rr)\otimes K_2^M(R)$ and $\tors(\mu_R, K_2^M(R))$
are uniquely $2$-divisible, $\beta$ must be a trivial map.
This implies that $H_3(\SL_2) \arr H_3(\GL_2)/H_3(\rr)$ is
injective. The rest of the proof is similar to the proof of
the part (i), since
\[
H_1(\rr, H_2(\SL_2)) \simeq \rr \otimes K_2^M(R).
\]
Here a splitting map
$\rr \otimes K_2^M(R)\arr H_3(\GL_2)/H_3(\rr)$
can be given by the formula
\begin{gather*}
a \otimes \{b,c\} \mt
{\bf c}(\diag(\sqrt{a}, \sqrt{a}),\diag(b,1), \diag(c, c^{-1})).
\end{gather*}
\end{proof}

\begin{cor}\label{bloch-wigner3}
Let $k$ be an algebraically closed field
of $\char(k)\neq 2$. Let
$R$ be the ring of dual numbers $k[\epsilon]$ or a henselian local
ring with residue field $k$.
\par {\rm (i)} Then we have the following exact sequence
\[
\tors(\mu_R, \mu_R) \arr H_3(\SL_2) \arr B(R) \arr 0.
\]
If $R$ is a henselian domain, then the left hand side map
in this exact sequence is injective.
\par {\rm (ii)} If $\char(k)=0$, then $H_3(\SL_2)\simeq K_3(R)^\ind$.
Furthermore, if $R$ is a $k$-algebra, then we have the exact
sequence
\[
0 \arr \tors(\mu_R, \mu_R) \arr K_3(R)^\ind \arr B(R) \arr 0.
\]
\end{cor}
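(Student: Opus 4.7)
The overall strategy is to reduce both parts to Corollary \ref{bloch-wigner2}(ii) and Theorem \ref{bloch-wigner1}. The key observation is that in both situations---$R=k[\epsilon]$ and $R$ a henselian local ring with residue field $k$---one has $\rr=\rr^2$, so that Corollary \ref{bloch-wigner2}(ii) applies verbatim to give the exact sequence
$$\tors(\mu_R,\mu_R)\arr H_3(\SL_2)\arr B(R)\arr 0.$$
When $R$ is a domain, the domain part of Theorem \ref{bloch-wigner1} upgrades the leftmost map to an injection. Note also that $R$ is automatically a ring with many units, since it is local with infinite residue field.

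To check $\rr=\rr^2$ for the dual numbers, write a unit as $a+b\epsilon$ with $a\in k^\times$ and solve $(c+d\epsilon)^2=a+b\epsilon$ by choosing a square root $c$ of $a$ (possible by algebraic closure) and setting $d=b/(2c)$, which is legitimate because $\char(k)\neq 2$. For a henselian local ring $(R,\mathfrak{m})$ and $u\in\rr$, first lift a square root of $\bar u\in k^\times$ to some $v\in\rr$ so that $uv^{-2}\in 1+\mathfrak{m}$; then $X^2-uv^{-2}$ reduces modulo $\mathfrak{m}$ to $(X-1)(X+1)$, whose roots are simple in $k$ since $\char(k)\neq 2$, so Hensel's lemma produces a lift to a root of $X^2-uv^{-2}$ in $R$.

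For part (ii) one first needs the identification $H_3(\SL_2)\simeq K_3(R)^\ind$ when $\char(k)=0$. I would establish this by adapting the argument in the algebraically closed field case used in the proof of Theorem \ref{bloch-wigner1}: combine homology stability $H_3(\SL_2)\simeq H_3(\SL_3)\simeq H_3(\SL)$ (available for rings with many units), the splitting $H_3(\GL)\simeq H_3(\SL)\oplus\rr\otimes K_2^M(R)\oplus H_3(\rr)$ used in Corollary \ref{bloch-wigner2}(i), and the Hurewicz map to identify the indecomposable part of $H_3(\SL)$ with $K_3(R)^\ind$; Gabber-type rigidity in the henselian local $k$-algebra setting should make the comparison with the residue field $k$ clean enough to carry this through. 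Granted this identification, part (i) gives the surjection $K_3(R)^\ind\arr B(R)$ with kernel controlled by $\tors(\mu_R,\mu_R)$. For the injectivity on the left when $R$ is a $k$-algebra, note that the structure map $k\arr R$ is split by reduction $R\arr k$, and that in characteristic zero $1+\mathfrak{m}$ contains no nontrivial roots of unity (the computation $(1+x)^n=1$ with $x\in\mathfrak{m}$ forces $x(n+\binom{n}{2}x+\cdots)=0$ and the second factor is a unit), so $\mu_R=\mu_k$. The composite
$$\tors(\mu_R,\mu_R)\arr K_3(R)^\ind\arr K_3(k)^\ind$$
then coincides with the corresponding map for $k$, which is injective by Suslin's description of torsion in $K_3^\ind$ of an algebraically closed field of characteristic zero \cite{suslin1983, suslin1984}.

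The principal technical obstacle is the isomorphism $H_3(\SL_2)\simeq K_3(R)^\ind$ for henselian local $k$-algebras with $\char(k)=0$: one cannot quote \cite[Proposition 6.4]{mirzaii-2008} directly, and careful bookkeeping of stability, rigidity, and the Hurewicz comparison is needed to replicate that identification in the present setting. The remaining ingredients---verification of $\rr=\rr^2$, the retraction $R\arr k$, and the field case injectivity---are either elementary or already available in the literature.
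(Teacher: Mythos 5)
Your proposal is correct and follows essentially the same route as the paper: part (i) is reduced to Theorem \ref{bloch-wigner1} and Corollary \ref{bloch-wigner2}(ii) via $\rr=\rr^2$ (Hensel's lemma), and part (ii) rests on transporting the identification $H_3(\SL_2)\simeq K_3(R)^\ind$ from the field case together with $\mu_R\simeq\mu_k$, the splitting of $K_3(k)^\ind$ off $K_3(R)^\ind$ via the retraction $R\arr k$, and Suslin's computation of the torsion in $K_3^\ind$ of an algebraically closed field. The step you flag as the principal obstacle is handled in the paper only by asserting that the proofs of the relevant results of \cite{mirzaii-2008} (Propositions 4.1 and 6.4, Theorems 5.4 and 6.1 there) go through \emph{mutatis mutandis} once $\mu_R\simeq\mu_k$ is known, so your level of detail on that point matches, and even exceeds, the paper's.
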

\begin{proof}
Since $\char(k)\neq 2$, $\rr=\rr^2$ (use Hensel's lemma, when $R$ is
henselian). Thus the first part follows from Theorem \ref{bloch-wigner1} and
Corollary \ref{bloch-wigner2}. The proof of the isomorphism
$H_3(\SL_2)\simeq K_3(R)^\ind$
is similar to the proof of Proposition 6.4(iii) in \cite{mirzaii-2008}.
To prove Proposition 6.4(iii) in \cite{mirzaii-2008}, we used
Theorems 5.4(iii) and 6.1(iii) in \cite{mirzaii-2008}. Note that these theorems
are also valid for rings considered here, with almost the same proofs, once
we make sure that Proposition 4.1 in \cite{mirzaii-2008} is valid as well.
But the proof of this proposition follows {\it mutatis mutandis} as in
\cite{mirzaii-2008}, noting that here $\mu_R\simeq \mu_k$.
If $R$ is the ring of dual numbers $k[\epsilon]$, the isomorphism
$\mu_R\simeq \mu_k$ follows from a direct computation.
If $R$ is a local henselian ring, this follows from Hensel's lemma.
In both cases we need the condition $\char(k)=0$.
To complete the proof, we have to show that if $R$ is a $k$-algebra, then
\[
\tors(\mu_k, \mu_k) \arr K_3(R)^\ind
\]
is injective.
In this case, $K_3(k)^\ind$ is a direct summand of $K_3(R)^\ind$
and the above map factors through $K_3(k)^\ind$. Thus the injectivity claim
can be proved as in the last paragraph of page 189 in \cite{dupont-sah1982}.
This also follows from a result of Suslin in \cite{suslin1991} or
\cite{suslin1984}.
\end{proof}

\begin{rem}
Corollary \ref{bloch-wigner3} remains true
if $k$ is an infinite field such that $k^\times={k^\times}^2$ and $\char(k)\neq 2$.
\end{rem}

\section{Relative bloch-wigner exact sequence}

For a functor $\ff:\Rings \arr \Ab$, from the category of rings
to the category of abelian
groups, we set $\ff(R[\epsilon], \lan
\epsilon \ran):=\ker(\ff(R[\epsilon])
\overset{\epsilon =0}\larr \ff(R))$.

\begin{prp}\label{relative-bloch-wigner}
{\rm (i)} Let $R[\epsilon]$
be the ring of dual numbers, where $R$ is a ring with many units.
Then we have the relative Bloch-Wigner exact sequence
\[
0 \! \arr\! K_3(R[\epsilon], \lan \epsilon \ran)_\q^\ind \arr
\ppp(R[\epsilon], \lan \epsilon \ran)_\q \arr \!\!
\bigwedge{\!\!{}^2}(R[\epsilon], \lan \epsilon \ran)_\q^\times
\arr\! K_2(R[\epsilon], \lan \epsilon \ran)_\q \arr 0.
\]
\par {\rm (ii)} Let k be an algebraically closed field of $\char(k)=0$.
Then we have the relative Bloch-Wigner exact sequence
\[
0  \arr K_3(k[\epsilon], \lan \epsilon \ran)^\ind \arr
\ppp(k[\epsilon], \lan \epsilon \ran) \arr
\bigwedge{\!\!{}^2}(k[\epsilon], \lan \epsilon \ran)^\times
\arr K_2(k[\epsilon], \lan \epsilon \ran) \arr 0.
\]
\end{prp}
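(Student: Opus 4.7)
My plan is to deduce each relative sequence by applying the absolute Bloch-Wigner exact sequence to $R[\epsilon]$ (respectively $k[\epsilon]$), then separating out the corresponding sequence for $R$ (respectively $k$) using the splitting of the augmentation $\epsilon=0$ by the canonical section $R\harr R[\epsilon]$, and identifying the kernels with the relative groups.

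For part (i), I first note that $R[\epsilon]$ inherits the many-units property from $R$: since an element of $R[\epsilon]$ is a unit iff its constant term is, a solution $v_0\in R^n$ to the many-units problem for the constant-term reductions of the $f_i$'s lifts trivially to $R[\epsilon]^n$. Theorem \ref{bloch-wigner1}, the defining sequence of $B(R[\epsilon])$ from Section \ref{section1}, and Corollary \ref{quillen-k2} then combine to give the $5$-term exact sequence
\[
\tors(\mu_{R[\epsilon]},\mu_{R[\epsilon]}) \arr \H_3(\SL_2(R[\epsilon])) \arr \ppp(R[\epsilon]) \arr (R[\epsilon]^\times \otimes R[\epsilon]^\times)_\sigma \arr K_2(R[\epsilon]) \arr 0.
\]
Tensoring with $\q$ annihilates the leftmost torsion term, replaces $\H_3(\SL_2,\q)$ by $K_3(R[\epsilon])^\ind\otimes\q$ via the isomorphism recalled in the introduction, and turns the $\sigma$-coinvariants into $\bigwedge{\!\!{}^2}(\cdot) \otimes \q$ (the difference being $2$-torsion). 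The whole construction is functorial in the ring, so the augmentation $\epsilon=0$ together with its section $R\harr R[\epsilon]$ splits each term of this $\q$-sequence for $R[\epsilon]$ as a direct sum of the corresponding term for $R$ and a relative kernel term. Since the splittings are natural, they commute with the differentials, so the $5$-term complex for $R[\epsilon]$ decomposes as an honest direct sum of chain complexes; exactness of the kernel summand then follows from exactness of the whole sequence and of the (absolute) $\q$-sequence for $R$. That summand is the claimed relative Bloch-Wigner sequence.

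For part (ii), I apply Corollary \ref{bloch-wigner3}(ii) to $R=k[\epsilon]$; combined with the defining sequence of $B(k[\epsilon])$ and Corollary \ref{quillen-k2}, it yields the $5$-term exact sequence
\[
0 \arr \tors(\mu_{k[\epsilon]},\mu_{k[\epsilon]}) \arr K_3(k[\epsilon])^\ind \arr \ppp(k[\epsilon]) \arr \bigwedge{\!\!{}^2} k[\epsilon]^\times \arr K_2(k[\epsilon]) \arr 0,
\]
where the $\sigma$-coinvariants have already been replaced by $\bigwedge^2$ because $k[\epsilon]^\times=(k[\epsilon]^\times)^2$ (since $\char(k)=0$, one can extract square roots by Hensel's lemma or a direct computation). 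The essential additional input (already noted in the proof of Corollary \ref{bloch-wigner3}) is that $\mu_{k[\epsilon]}\simeq\mu_k$, so $\epsilon=0$ induces an isomorphism on the $\tors$ term, leaving a trivial relative kernel there. Splitting off the sequence for $k$ exactly as in part (i) then yields the relative Bloch-Wigner sequence with $0$ on the left.

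The one step deserving care in both parts is the passage from a morphism of $5$-term exact sequences, induced by $\epsilon=0$, to exactness of its termwise kernel complex. This is handled by naturality: each functor in the diagram ($\ppp$, $\bigwedge^2(\cdot)^\times$, $K_2$, $K_3^\ind$, $\tors(\mu,\mu)$) is functorial in commutative rings with many units, and because the section $R\harr R[\epsilon]$ is a ring homomorphism, the induced splittings on individual terms automatically commute with the differentials. Hence the $5$-term complex for $R[\epsilon]$ truly decomposes as a direct sum of two chain complexes, and exactness of the relative summand is forced.
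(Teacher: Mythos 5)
Your proposal is correct and follows essentially the same route as the paper: both deduce the relative sequence by comparing the absolute Bloch--Wigner sequences for $R[\epsilon]$ and $R$ via the augmentation $\epsilon=0$, the paper extracting the kernel sequence by breaking the rows into short exact sequences and applying the Snake lemma (using surjectivity of the vertical maps), while you use the equivalent observation that the section $R\harr R[\epsilon]$ splits the whole diagram of complexes. Your explicit verifications (that $R[\epsilon]$ has many units, that $\otimes\,\q$ kills the $\tors$ term and identifies $(\cdot\otimes\cdot)_\sigma$ with $\bigwedge^2$, and that $\mu_{k[\epsilon]}\simeq\mu_k$ forces the left-hand $0$ in part (ii)) match the ingredients the paper invokes by citation.
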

\begin{proof}
The ring $R[\epsilon]$ is a ring with many units
\cite[Proposition 2.5]{elbaz1999}.
Consider the following diagram with exact rows

\[
\begin{array}{ccccccccccc}
 0 \!\!\! &\!\!\! \larr \!\!\! &\!\!\! K_3(R[\epsilon])_\q^\ind
 \!\!\!&\!\!\! \larr\!\!\!
& \!\!\!  \ppp(R[\epsilon])_\q\!\!\! &  \!\!\! \larr \!\!\!
&\!\!\! \bigwedge{\!\!{}^2}{R[\epsilon]}_\q^\times\!\!\!
& \!\!\!\larr\!\!\! & \!\!\!K_2(R[\epsilon])_\q\!\!\! &\!\!\! \larr
\!\!\!& \!\!\! 0 \\
&  &\Bigg\downarrow\vcenter{%
\rlap{$\scriptstyle{\epsilon=0}$}}
&       & \Bigg\downarrow\vcenter{%
\rlap{$\scriptstyle{\epsilon=0}$}}
&  &\Bigg\downarrow{%
\rlap{$\scriptstyle{\epsilon=0}$}}
&      &  \Bigg\downarrow{%
\rlap{$\scriptstyle{\epsilon=0}$}}& &\\
 0 \!\!\! & \larr & K_3(R)_\q^\ind & \larr
&   \ppp(R)_\q &   \larr & \bigwedge{\!\!{}^2}R_\q^\times
& \larr & K_2(R)_\q & \larr & \!\!\! 0.
\end{array}
\]
The exact rows follow from Corollary \ref{bloch-wigner2} and
\cite[Proposition 6.4]{mirzaii-2008}, \cite[Theorem 2.2]{elbaz1998}.
The vertical maps are surjective. We can break the above diagram
into two diagrams with rows that are short exact sequences. Now the part (i)
follows from the Snake lemma. The proof of (ii) is analogue.
Here one has to use Corollary \ref{bloch-wigner3}(ii).
\end{proof}

\begin{rem}
(i) For a commutative ring $R$, the relative group
$K_2(R[\epsilon], \lan \epsilon \ran)$ is studied by
W. van der Kallen in terms of generators and relations
\cite{vdkallen1971}. As a special case, he proves that
when $\frac{1}{2} \in R$, then
$K_2(R[\epsilon], \lan \epsilon \ran)\simeq \Omega_{R/\z}^1$.
\par (ii)
The relative group
$K_3(R[\epsilon], \lan \epsilon \ran)_\q^\ind$ is studied in
\cite{elbaz1999} in terms of homology of linear groups.
\par (iii)
For a field $k$, the groups $\ppp(k[\epsilon], \lan \epsilon \ran)$
and $K_3(k[\epsilon], \lan \epsilon \ran)^\ind$ are studied by
Cathelineau in \cite{cathelineau2009}.
\end{rem}

\begin{cor}
Let k be an algebraically closed field of $\char(k)=0$.
Let $R$ be a henselian local $k$-algebra with residue ring
$R/\mathfrak{m} \simeq k$. For a functor $\ff:\Rings \arr \Ab$, define the
relative group
$\ff(R, \mathfrak{m}):=\ker\Big(\ff(R) \arr \ff(R/\mathfrak{m})\Big)$.
Then we have the relative Bloch-Wigner exact sequence
\[
0 \! \arr\! K_3(R,\mathfrak{m})^\ind \arr
\ppp(R, \mathfrak{m})\arr \!\!
\bigwedge{\!\!{}^2}(R, \mathfrak{m})^\times
\arr\! K_2(R, \mathfrak{m}) \arr 0.
\]
\end{cor}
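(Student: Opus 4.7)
The plan is to mimic the argument of Proposition \ref{relative-bloch-wigner}(ii), replacing the role of the reduction $R[\epsilon]\to R$ (sending $\epsilon$ to $0$) by the reduction $R\to R/\mathfrak{m}\simeq k$. The key structural fact is that since $R$ is a henselian local $k$-algebra with residue ring $k$, the composition $k\harr R\two R/\mathfrak{m}\simeq k$ is the identity, so the reduction is a split surjection of $k$-algebras. In particular the induced maps on $\ppp$, on $\bigwedge{\!\!{}^2}(-)^\times$, on $K_2$ and on $K_3^{\ind}$ are all split surjections.

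First I would invoke Corollary \ref{bloch-wigner3}(ii), which applies both to $R$ (a henselian local $k$-algebra with residue field $k$, $\char(k)=0$) and trivially to $k$ itself. This yields the two absolute Bloch--Wigner exact sequences
\[
0 \arr K_3(R)^\ind \arr \ppp(R) \arr \bigwedge{\!\!{}^2}R^\times \arr K_2(R) \arr 0,
\]
\[
0 \arr K_3(k)^\ind \arr \ppp(k) \arr \bigwedge{\!\!{}^2}k^\times \arr K_2(k) \arr 0,
\]
and I would arrange them as the rows of a commutative diagram with the four vertical maps given by reduction modulo $\mathfrak{m}$. Here one has to check that $R$ is indeed a ring with many units (it is, since henselian local rings with infinite residue field are rings with many units), so that the hypotheses of Corollary \ref{bloch-wigner3}(ii) are genuinely in force.

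Next, I would break each five-term sequence into two short exact sequences using the image $I(R):=\im(\lambda_R)\se\bigwedge{\!\!{}^2}R^\times$ and $I(k):=\im(\lambda_k)$, giving the pairs
\[
0\arr K_3(R)^\ind \arr \ppp(R)\arr I(R)\arr 0,\qquad 0\arr I(R)\arr \bigwedge{\!\!{}^2}R^\times \arr K_2(R)\arr 0
\]
and similarly over $k$. Applying the Snake lemma to each pair, and using that the vertical reduction maps are split surjections (hence surjective with no $\coker$ contribution), one gets the relative short exact sequences
\[
0 \arr K_3(R,\mathfrak{m})^\ind \arr \ppp(R,\mathfrak{m})\arr I(R,\mathfrak{m})\arr 0,
\]
\[
0\arr I(R,\mathfrak{m})\arr \bigwedge{\!\!{}^2}(R,\mathfrak{m})^\times \arr K_2(R,\mathfrak{m})\arr 0,
\]
where $I(R,\mathfrak{m}):=\ker(I(R)\arr I(k))$. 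Splicing these two sequences at $I(R,\mathfrak{m})$ yields the desired four-term exact sequence, with the leftmost $0$ coming from the splitness of $K_3(R)^\ind\two K_3(k)^\ind$.

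The main obstacle, and the only place where genuine content enters, is justifying that the diagrams really do break cleanly as above, i.e., that the map $\ppp(R,\mathfrak{m})\to \bigwedge{\!\!{}^2}(R,\mathfrak{m})^\times$ has the predicted image and kernel. This rests on the split surjectivity of the vertical reductions, which comes from the $k$-algebra structure of $R$ together with $R/\mathfrak{m}\simeq k$; once that is in hand, the Snake lemma does all the work, exactly as in Proposition \ref{relative-bloch-wigner}(ii). A secondary but minor point is to ensure that $R^\times$ surjects onto $k^\times$ with kernel $(R,\mathfrak{m})^\times=1+\mathfrak{m}$, which is immediate from the henselian hypothesis and is used implicitly in identifying $\bigwedge{\!\!{}^2}(R,\mathfrak{m})^\times$.
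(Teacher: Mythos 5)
Your overall strategy --- writing the two absolute Bloch--Wigner sequences as the rows of a commutative diagram over the split reduction $R \two R/\mathfrak{m}\simeq k$ and then applying the Snake lemma --- is exactly what the paper intends by ``similar to the proof of Proposition~\ref{relative-bloch-wigner}''. However, the rows you write down are not exact, and this is precisely where the one piece of genuine content lives. Corollary~\ref{bloch-wigner3}(ii) does \emph{not} assert that $K_3(R)^\ind$ injects into $\ppp(R)$; it asserts the exactness of $0 \arr \tors(\mu_R,\mu_R)\arr K_3(R)^\ind \arr B(R) \arr 0$, and for $k$ algebraically closed of characteristic $0$ one has $\tors(\mu_R,\mu_R)\simeq \tors(\q/\z,\q/\z)\simeq \q/\z\neq 0$. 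So the correct absolute rows are the six-term sequences $0\arr\tors(\mu_R,\mu_R)\arr K_3(R)^\ind\arr\ppp(R)\arr(\rr\otimes\rr)_\sigma\arr K_2(R)\arr 0$, and likewise over $k$; your first two displayed ``exact sequences'' are false as stated (this is also why Proposition~\ref{relative-bloch-wigner}(i) works rationally: there the Tor term dies after tensoring with $\q$).

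Consequently your explanation of the leftmost $0$ in the relative sequence --- ``splitness of $K_3(R)^\ind\two K_3(k)^\ind$'' --- is not the actual reason and would not suffice: splitness of the vertical surjections only guarantees that the kernels of the vertical maps again form an exact complex, it says nothing about whether the leftmost such kernel vanishes. What actually produces the $0$ is that $\mu_R\simeq\mu_k$ (Hensel's lemma together with $\char(k)=0$, as recorded in the proof of Corollary~\ref{bloch-wigner3}), so the vertical map $\tors(\mu_R,\mu_R)\arr\tors(\mu_k,\mu_k)$ is an isomorphism and the relative Tor term $\ker\big(\tors(\mu_R,\mu_R)\arr\tors(\mu_k,\mu_k)\big)$ vanishes; feeding the corrected six-term rows into the Snake lemma then yields $0\arr K_3(R,\mathfrak m)^\ind\arr\ppp(R,\mathfrak m)\arr\cdots$ as claimed. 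The remaining points you raise are fine and worth keeping: $R$ is a ring with many units (local with infinite residue field), all vertical maps are split surjections because $k\arr R\arr k$ is the identity, and $(\rr\otimes\rr)_\sigma\simeq\bigwedge^2\rr$ here because $\rr=\rr^2$ (Hensel plus divisibility of $k^\times$), which reconciles the target of $\lambda$ with the $\bigwedge^2$ appearing in the statement.
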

\begin{proof}
The proof is similar to the proof of Proposition \ref{relative-bloch-wigner}.
\end{proof}

\subsection*{Acknowledgements}
Part of this work was done during my stay at IH\'ES and ICTP.
I would like to thank them for their support and hospitality.


\bigskip
\address{{\footnotesize

Department of Mathematics,

Institute for Advanced Studies in Basic Sciences,

P. O. Box. 45195-1159, Zanjan, Iran.

email:\ bmirzaii@iasbs.ac.ir

}}
\end{document}